\definecolor{dred}{RGB}{180,0,0}
\DeclareRobustCommand{\em}{%
  \@nomath\em \if b\expandafter\@car\f@series\@nil
  \normalfont \else \bfseries\itshape \fi}
\newtheorem{theorem}[subsection]{Theorem}
\newtheorem{corollary}[subsection]{Corollary}
\newtheorem{lemma}[subsection]{Lemma}
\newtheorem{proposition}[subsection]{Proposition}
\theoremstyle{definition}
\newtheorem{definition}[subsection]{Definition}
\newtheorem{example}[subsection]{Example}
\theoremstyle{remark}
\newtheorem{remark}[subsection]{Remark}
\numberwithin{equation}{section}
\numberwithin{figure}{section}
\newcommand{\B}[1]{{\mathbf #1}}
\newcommand{\C}[1]{{\mathcal #1}}
\newcommand{\OP}{\operatorname}
\newcommand{\qu}{\triangleright}
\begin{document}

\title{On the geometry and bounded cohomology of racks and quandles}
\author{Jarek K\k{e}dra}
\affil{
University of Aberdeen and University of Szczecin\\
{\tt kedra@abdn.ac.uk}
}



\maketitle

\begin{abstract}
We introduce and investigate a natural family of metrics on connected components
of a rack. The metrics are closely related to certain bi-invariant metrics
on the group of inner automorphisms of the rack. We also introduce a bounded
cohomology of racks and quandles, relate them to the above metrics and prove
a vanishing result for racks and quandles with amenable group of
inner automorphisms.
\end{abstract}

\section{Introduction}\label{S:intro}

Racks and quandles are algebraic structures resembling conjugation in a group
or crossing relations in a knot diagram. They are also invertible solutions of
the Yang-Baxter equation \cite{zbMATH01878448}. Quandles were first observed in
nature by Joyce \cite{zbMATH03744146} and Matveev \cite{zbMATH03828846} and
since then have been spreading through various branches of mathematics
\cite{zbMATH06343943,zbMATH07050797,zbMATH07131234}. Their most successful
applications so far have been in knot theory, where they are used to classify
knots with not so many crossings. A rack is a slightly more general structure
than a quandle, so that every quandle is a rack but not vice versa. Every rack
admits a maximal quandle quotient. 

Although they are usually defined algebraically, as binary operations on a set,
in this paper we propose a more geometric approach. First of all, we mostly
make use of a more geometric definition, which to every element $x$ of a set
$X$ associates a bijection of $X$ plus a certain compatibility axiom
(Definition \ref{D:geo-quandle}).  Secondly, we observe that every rack is
equipped with a family of natural metrics on its connected components
(Definition \ref{D:q-metric}).  In other words, a rack is a disjoint union of
metric spaces and the automorphism group of the rack acts on it by isometries
(Proposition \ref{P:iso}).

Our first result establishes a relation between the rack metric and a certain
bi-invariant metric on the group of inner automorphism of the rack (Theorem
\ref{T:quandle-metric}). As an application of this relatively simple
observation together with the general knowledge of bi-invariant metrics on
groups, we identify families of racks and quandles for which their  metrics on
all connected components have finite diameter. Here is a sample result
(see Corollary \ref{C:chevalley} and \ref{C:Lie}).  

\begin{theorem}\label{T:bounded-racks}
Let $(X,\qu)$ be a rack. If the group of its inner automorphisms is either an
S-arithmetic Chevalley group of higher rank or a semi-simple Lie group with
finite centre then the diameter of every connected component of $(X,\qu)$ is
finite.
\end{theorem}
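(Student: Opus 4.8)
The plan is to reduce the statement, via Theorem~\ref{T:quandle-metric}, to a boundedness property of the bi-invariant word metric on the inner automorphism group, and then to quote the rigidity of the two listed classes of groups. Recall that $\OP{Inn}(X,\qu)$ is generated by the point symmetries $S_x\colon y\mapsto y\qu x$, and that $\phi S_x\phi^{-1}=S_{\phi(x)}$ for every automorphism $\phi$; hence the generating set $\{S_x:x\in X\}$ is conjugation-invariant and the associated word metric is bi-invariant. A connected component $C$ is precisely an orbit of $\OP{Inn}(X,\qu)$, so for $x,y\in C$ there is a group element sending $x$ to $y$, and a minimal word for it realises a rack-path of the same length; thus $\operatorname{diam}(C)\le\operatorname{diam}\bigl(\OP{Inn}(X,\qu)\bigr)$ measured in the bi-invariant metric. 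This is the content I would extract from Theorem~\ref{T:quandle-metric}, and it already shows that the whole statement follows once the bi-invariant metric on $\OP{Inn}(X,\qu)$ is bounded.

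It therefore suffices to prove that if $G=\OP{Inn}(X,\qu)$ is either a higher-rank S-arithmetic Chevalley group or a semisimple Lie group with finite centre, then every conjugation-invariant norm on $G$ is bounded; since the bi-invariant word metric above is itself such a norm, finite diameter of all components is then immediate. For the Chevalley case I would appeal to bounded generation by elementary (unipotent) elements, in the spirit of Carter--Keller and Tavgen: every element is a product of boundedly many elementary generators, these generators meet only finitely many conjugacy classes, and the group is uniformly perfect. Any conjugation-invariant norm is therefore bounded on the generators and hence, by bounded generation, on all of $G$. For a semisimple Lie group with finite centre I would use the analogous structural facts, namely uniform perfectness together with bounded generation by unipotent one-parameter subgroups, to reach the same conclusion. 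These two arguments are exactly what Corollaries~\ref{C:chevalley} and~\ref{C:Lie} record.

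The reduction in the first paragraph is essentially formal and the Lie-group case is classical, so I expect the real obstacle to be the Chevalley case: the boundedness of conjugation-invariant norms there rests on the deep bounded-generation theorems for higher-rank S-arithmetic groups, which genuinely fail in rank one. The crux of the proof is thus to invoke and apply that external input correctly rather than to reprove it.
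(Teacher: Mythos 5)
Your proof follows the paper's route exactly: the component is an orbit of $\OP{Inn}(X,\qu)$, its rack metric is controlled by the bi-invariant word metric associated with the conjugation-invariant generating set $\psi(X)$ (this is Theorem~\ref{T:quandle-metric} together with Corollary~\ref{C:bounded}), and finiteness of the diameter of that metric is then quoted from the known boundedness of higher-rank S-arithmetic Chevalley groups and of semisimple Lie groups with finite centre, exactly as in Corollaries~\ref{C:chevalley} and~\ref{C:Lie}. One inaccuracy in your heuristic for the Chevalley case: the elementary root elements do \emph{not} meet only finitely many conjugacy classes (for instance $e_{12}(t)=e_{12}(1)^t$, and the index invariant of $(e_{12}(t)-I)\B Z^n$ in its saturation distinguishes these classes, so a conjugation-invariant norm is not a priori bounded on them), hence the cited boundedness theorems rest on commutator identities among root subgroups rather than on the ``bounded generation plus finitely many classes'' argument you sketch; since both you and the paper ultimately treat this boundedness as external input, this does not affect the correctness of the proof.
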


On the other hand, we show that all connected components of a
free product of quandles (satisfying a mild hypothesis) have infinite diameter
(Proposition \ref{P:free-product}). This holds, for example, for free
nontrivial racks and quandles (Corollary \ref{C:free-qr}).
We also show in Example \ref{E:knot} that the quandle metric on the quandle of
a nontrivial knot has infinite diameter.

In the second part of the paper we introduce the bounded cohomology of racks
and quandles with real coefficients. First, we relate the second bounded
cohomology to the geometry of the above metric (Propositions \ref{P:nontrivial-kernel} and
\ref{P:unb}).

\begin{theorem}\label{T:H2b-rack}
Let $(X,\qu)$ be a rack or a quandle with finitely many connected components.
Then $(X,\qu)$ is unbounded (Definition \ref{D:bounded})
if and only if the comparison map
$H^2_b(X;\B R)\to H^2(X;\B R)$ has nontrivial kernel.
\end{theorem}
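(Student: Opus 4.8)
The plan is to deduce the statement by isolating the two implications and organising everything around the standard reformulation of $\ker\bigl(H^2_b\to H^2\bigr)$ in terms of quasimorphisms; the two directions are exactly the content of Propositions \ref{P:nontrivial-kernel} and \ref{P:unb}, so the theorem will be assembled by combining them. Because the comparison map is induced by the inclusion of the bounded ($\ell^\infty$) sub-complex into the full real rack cochain complex, a formal diagram chase yields the four-term exact sequence
\[
0 \to H^1(X;\B R) \to Q(X) \xrightarrow{\ \delta\ } H^2_b(X;\B R) \xrightarrow{\ c\ } H^2(X;\B R),
\]
where $Q(X)$ is the space of homogeneous rack quasimorphisms, i.e.\ $1$-cochains whose rack-coboundary is bounded, and $H^1(X;\B R)\subseteq Q(X)$ consists of those with vanishing coboundary (genuine cocycles). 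Hence $c$ has nontrivial kernel exactly when $Q(X)\neq H^1(X;\B R)$, that is, when $(X,\qu)$ carries a homogeneous quasimorphism that is \emph{not} a cocycle. The problem thus reduces to showing that $X$ is unbounded if and only if such a quasimorphism exists. The hypothesis of finitely many components enters here: it guarantees that unboundedness of $X$ localises to a single component, and it keeps $H^1(X;\B R)$ and the above sequence under control, so that I may reduce to the case where $X$ is connected; then $\OP{Inn}(X)$ acts transitively and the natural generators $S_x$ form a single conjugacy class, which is precisely what makes the metric side and the cohomological side comparable.

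For the implication ``nontrivial kernel $\Rightarrow$ unbounded'' (Proposition \ref{P:nontrivial-kernel}) I would take a homogeneous quasimorphism $\phi$ representing a nonzero class in $\ker(c)$ and transport it, via Theorem \ref{T:quandle-metric}, to the bi-invariant word norm $\|\cdot\|$ on $\OP{Inn}(X)$. Since $\phi$ has bounded defect and is constant on the single conjugacy class $\{S_x\}$ (being conjugation-invariant), one gets a linear estimate $|\phi(g)|\le C\,\|g\|$. As $\phi$ is not a cocycle it is not a homomorphism, so it does not vanish on the commutator subgroup of $\OP{Inn}(X)$; being homogeneous it is therefore unbounded there, and the commutator subgroup lies inside the transvection subgroup that governs distances within the component. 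The linear estimate then forces $\|\cdot\|$, hence the diameter of the component, to be infinite.

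The reverse implication ``unbounded $\Rightarrow$ nontrivial kernel'' (Proposition \ref{P:unb}) is the main obstacle, precisely because unboundedness of a conjugation-invariant word norm does \emph{not} in general manufacture a quasimorphism. The hard part will be to upgrade mere unboundedness of the diameter into the non-vanishing of a stable, duality-accessible invariant of $\OP{Inn}(X)$ (a stable commutator length / translation-length type quantity) attached to the single conjugacy class of generators supplied by Theorem \ref{T:quandle-metric}; this is where the precise form of Definition \ref{D:bounded} and the finiteness of the number of components must be used. Once that is in place, Bavard-type duality produces a homogeneous quasimorphism detecting the growth, which by construction has nonzero defect and is therefore not a cocycle. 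The remaining delicate point is to check that this quasimorphism is a genuine rack quasimorphism, so that it represents a class in $\ker(c)$ rather than one surviving to $H^2(X;\B R)$; combining this with the previous paragraph then closes the equivalence.
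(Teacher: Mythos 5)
Your proposal correctly identifies that the theorem is the conjunction of Propositions \ref{P:nontrivial-kernel} and \ref{P:unb}, but the arguments you sketch for both directions import group-theoretic machinery (homogeneous quasimorphisms, Bavard duality, stable commutator length) that neither applies to rack quasimorphisms nor is needed. A rack quasimorphism (Definition \ref{D:qqmor}) is merely a function $f\colon X\to\B R$ with $|f(y)-f(x\qu y)|\le D$; there is no power operation on $X$ and hence no homogenisation, so the four-term exact sequence with ``homogeneous'' quasimorphisms does not make sense as stated, and claims such as ``$\phi$ is constant on the single conjugacy class'' or ``$\phi$ does not vanish on the commutator subgroup of $\OP{Inn}(X)$'' conflate functions on $X$ with functions on $\OP{Inn}(X,\qu)$. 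The ``transvection subgroup'' you invoke is not defined anywhere, and the reduction to the connected case is unnecessary.

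The direction you single out as the main obstacle (unbounded $\Rightarrow$ nontrivial kernel) is in fact the easy one, and your stated worry --- that unboundedness of a bi-invariant word norm on a group does not manufacture a group quasimorphism --- is true for groups but beside the point here. Because the rack coboundary $\delta f(x,y)=f(y)-f(\psi_x(y))$ only compares $y$ with its image under a \emph{single} generator, the distance function $f(x)=d(x_s,x)$ to a basepoint of each component satisfies $|\delta f|\le 1$ and is therefore already a rack quasimorphism; it is unbounded by hypothesis, so $[\delta f]$ is a nonzero class in the kernel by the argument of Lemma \ref{L:H2b}. That is the whole of Proposition \ref{P:unb}; no Bavard-type duality is involved, and your plan for this direction is left entirely unexecuted (``the hard part will be\dots''). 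For the converse it suffices to observe that any kernel class equals $[\delta f]$ for a $1$-cochain $f$ with bounded coboundary, i.e.\ a rack quasimorphism; by Proposition \ref{P:qqm} such an $f$ is Lipschitz for the rack metric, so if all of the finitely many components have finite diameter then $f$ differs from a function constant on components by a bounded cochain and $[\delta f]=0$ in $H^2_b(X;\B R)$. As written, neither implication in your proposal is established.
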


Having introduced bounded cohomology, it is natural to test it under a suitable
amenability hypothesis. Functions that are constant on the connected components
of a rack give rise to {\it obvious} nontrivial bounded classes.  If the group
of inner automorphisms of a rack is bounded and amenable then this is, in fact,
all (see Theorem \ref{T:a-bounded} for more precise statement and Remark
\ref{R:a-quandle} for the statement for quandles).

\begin{theorem}\label{T:amenable-rq}
Let $(X,\qu)$ be a rack with bounded, amenable group of inner automorphisms.
Then there is an isomorphism
$$
\OP{Fun}_b(\pi_0(X,\qu)^k)\cong H^k_b(X;\B R),
$$
where $\OP{Fun}_b(\pi_0(X,\qu)^k,\B R)$ denotes the set of bounded functions on
the $k$-fold product of the space $\pi_0(X,\qu)$ of connected components of
the rack.
\end{theorem}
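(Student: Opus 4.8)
The plan is to compare the bounded rack cochain complex of $(X,\qu)$ with that of its quotient by connected components, using amenability to produce an averaging homotopy. Write $G$ for the (bounded, amenable) group of inner automorphisms and recall that the connected components of $(X,\qu)$ are exactly the $G$-orbits, so $G$ acts transitively on each component. The bounded cohomology $H^*_b(X;\B R)$ is computed by the complex $\OP{Fun}_b(X^\bullet)$ of bounded real cochains equipped with the rack coboundary, whose face maps come in two kinds: plain omissions, and omissions twisted by the inner automorphism $S_{x_i}$ attached to the omitted entry. The projection $\pi\colon X\to \pi_0(X,\qu)$ onto the trivial rack on the set of components is a morphism of racks, and on a trivial rack every twisted face agrees with the corresponding plain face, so the coboundary vanishes identically; hence $H^k_b(\pi_0(X,\qu))=\OP{Fun}_b(\pi_0(X,\qu)^k)$ and $\pi^*$ furnishes the natural comparison map
$$
\Phi\colon \OP{Fun}_b(\pi_0(X,\qu)^k)\longrightarrow H^k_b(X;\B R)
$$
realising the \emph{obvious} classes. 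First I would record that $\Phi$ is the inclusion of the subcomplex of cochains that are constant on components, these being cocycles precisely because $[x_i\qu x_j]=[x_i]$ makes the twisted faces collapse onto the plain ones (one checks this directly in degrees one and two).

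Next I would build a one-sided inverse from the amenable structure. Fix a left-invariant mean $m$ on $\OP{Fun}_b(G)$; since every cochain is bounded, $m$ may be applied to it in the variables through which $G$ acts via the twisted faces. This produces a retraction $P$ of $\OP{Fun}_b(X^k)$ onto the $G$-invariant bounded cochains together with operators $s\colon \OP{Fun}_b(X^{k+1})\to\OP{Fun}_b(X^{k})$ satisfying an identity of the form
$$
\delta s + s\,\delta = \OP{id}-P
$$
for $k\ge 1$, modelled on the contracting homotopy of Ivanov and Gromov for the bounded cohomology of amenable groups. Because $m$ is $G$-invariant, after averaging the twisted and plain faces become interchangeable, which is what forces the cross terms to cancel and yields the identity. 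This verification is the technical heart of the argument and the step I expect to be the main obstacle, since the rack coboundary is not the bar differential of a group: one must track carefully how the inner automorphism attached to the auxiliary variable interacts with both families of faces and with the partial action of $S_{x_i}$ on the earlier coordinates.

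Granting the homotopy, it remains to identify the image of $P$, the invariant bounded cochains, with $\OP{Fun}_b(\pi_0(X,\qu)^k)$, whereupon $\delta$ vanishes on it and $H^k_b(X;\B R)\cong H^k(\operatorname{im}P)$ is computed by $\Phi$. Here transitivity of $G$ on each component does the work in each slot, and the hypothesis that $G$ is \emph{bounded} is what removes the remaining invariants: by the metric comparison of Theorem \ref{T:quandle-metric} boundedness of $G$ makes every connected component have finite diameter, so by Theorem \ref{T:H2b-rack} the comparison map carries no quasimorphism-type kernel and no further bounded classes survive beyond those detected on $\pi_0(X,\qu)^k$. Thus $\operatorname{im}P$ coincides with the constant-on-components subcomplex, and combining this identification with the homotopy identity shows $\Phi$ is both injective and surjective in every degree (the case $k=0$ being immediate, as $H^0_b(X;\B R)=\B R=\OP{Fun}_b(\pi_0(X,\qu)^0)$), giving the desired isomorphism $\OP{Fun}_b(\pi_0(X,\qu)^k)\cong H^k_b(X;\B R)$.
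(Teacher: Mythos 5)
There is a genuine gap, in fact two. First, your reduction to invariant cochains rests on a homotopy $\delta s + s\delta = \operatorname{id}-P$ that you do not construct and yourself flag as the main obstacle; note that in the paper this step is \emph{not} a formal transplant of the Ivanov--Gromov homotopy, and it is exactly where the boundedness of $G_\psi$ is used. Concretely, Lemma \ref{L:f=fz} gives $f - f\cdot g = \delta\alpha_g$ with $\alpha_g$ a sum of one term per letter in a decomposition $g=\psi_{x_1}\cdots\psi_{x_n}$; only because the conjugation-invariant word norm on $G_\psi$ has finite diameter can $n$ be taken $\leq N$ uniformly, so that $\alpha_g$ is uniformly bounded in $g$ and the mean $\B m(g\mapsto \alpha_g)$ produces a \emph{bounded} primitive of $f-P(f)$. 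In your sketch boundedness enters only later (and incorrectly), so even if you produced the homotopy formally, you would have no control on the boundedness of $s$.

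Second, and fatally, the identification of $\operatorname{im}P$ (the $G_\psi$-invariant bounded cochains) with $\OP{Fun}_b(\pi_0(X,\qu)^k)$ is false: $G_\psi$ acts on $X^k$ \emph{diagonally}, and the orbits of the diagonal action are not products of components, so ``transitivity in each slot'' does not apply. Already for $X$ connected and $k=2$ the space $\OP{Fun}_b(\pi_0(X)^2)$ is one-dimensional while $C^2_{b,\mathrm{inv}}(X;\B R)$ is typically huge. Showing that every invariant cohomology class nonetheless comes from $\pi_0(X)^k$ is the bulk of the paper's argument: one splits an invariant cocycle $f$ as $f^+ + f^-$ with $f^+_x = P(f_x)$ using a bi-invariant mean, proves $f^-$ is a bounded coboundary, writes $f^+$ as $\sum_s I_s\otimes f^+_s$ and inducts on the degree. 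Your appeal to Theorem \ref{T:H2b-rack} cannot substitute for this: that theorem concerns only the kernel of the comparison map in degree $2$ and says nothing about surjectivity of $\pi^*$ in degrees $k\geq 2$.
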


Examples of amenable bounded groups include semisimple compact Lie groups
\cite{zbMATH07757261} and affine Coxeter groups \cite{zbMATH07054643,zbMATH05968646}.
Since $\OP{Fun}_b(\pi_0(X,\qu)^k,\B R)\cong H^k_b(\pi_0(X,\qu))$, where
$\pi_0(X,\qu)$ is a trivial rack (see Example \ref{E:trivial}, \ref{E:pi0}),
the above theorem is indeed a result about the triviality of the bounded
cohomology. The main difference between the above result and the corresponding
one in group theory is the boundedness hypothesis here. The above theorem is 
an extension of a result by Etingof and Gra\~na, who proved an analogous
statement for finite racks \cite[Theorem 4.2]{zbMATH01878448}.

\paragraph{Acknowledgements.} I would like to thank Markus Szymik for
introducing me to racks and quandles and for answering my questions.

\section{Preliminaries}

\subsection{Racks, quandles and their metrics}

\begin{definition}[Algebraic]\label{D:quandle}
A {\em quandle} is a non-empty set $X$ together with a binary operation $\qu\colon X\times X\to X$
satisfying the following axioms:
\begin{enumerate}
\item[{\bf A0}] $x\qu - \colon X\to X$ is a bijection for every $x\in X$;
\item[{\bf A1}] $x\qu (y\qu z) = (x\qu y)\qu(x\qu z)$ for every $x,y,z\in X$;
\item[{\bf A2}] $x\qu x = x$ for every $x\in X$.
\end{enumerate}
If $(X,\qu)$ satisfies the first two axioms only then it is called a {\em rack}.
\end{definition}

\begin{definition}[Geometric]\label{D:geo-quandle}
Let $X$ be a non-empty set and let $\OP{Sym}(X)$ denotes the group of all bijections of $X$.
A {\em quandle} is a map $\psi\colon X\to \OP{Sym}(X)$ such that
\begin{enumerate}
\item[{\bf G1}] $\psi_{\psi_x(y)} = \psi_x\circ\psi_y\circ\psi_x^{-1}$ for all $x,y\in X$;
\item[{\bf G2}] $\psi_x(x)=x$ for all $x\in X$.
\end{enumerate}
If $\psi$ satisfies the first axiom only then it is called a {\em rack}.
The equivalence of the above definitions is given by the formula
$$
\psi_x(y) = x\qu y
$$
and a straightforward verification that suitable axioms are equivalent.
\end{definition}

\begin{definition}\label{D:component}
Let $(X,\qu)$ be a rack. Two elements $x,y\in X$ are equivalent if
there exist $x_1,\ldots,x_n\in X$ such that
$$
y = \left( \psi_{x_1}^{\pm 1}\circ \dots \circ \psi_{x_n}^{\pm 1} \right)(x).
$$
The equivalence class with respect to this equivalence relation is called 
a {\em connected component} of $(X,\qu)$. The set of connected components
of a rack is denoted by $\pi_0(X,\qu)$.
\end{definition}

\begin{definition}\label{D:q-metric}
Let $X_0\subseteq X$ be a connected component of a rack. For $x,y\in X_0$ define
$$
d_0(x,y) = 
\min\left\{ n\in \B N\ |\ y=\left( \psi_{x_1}^{\pm 1}\circ \dots \circ \psi_{x_n}^{\pm 1} \right)(x)\right\}.
$$
It defines a metric on $X_0$ called the {\em rack metric}.
\end{definition}

\begin{remark}
The above metric is the graph metric on the Cayley graph of the rack $(X,\qu)$
associated with the presentation of $(X,\qu)$ given by the multiplication table.
In other words, with presentation of $(X,\qu)$ in which the whole $X$ is a generating set.
\end{remark}

\subsection{Automorphisms of a rack}

A bijective map $\alpha\colon X\to X$ is called an automorphism of a rack
$(X,\qu)$ if
$$
\alpha(x\qu y) = \alpha(x)\qu \alpha(y).
$$
The group of all automorphisms of a rack is denoted by $\OP{Aut}(X,\qu)$.
Observe that for every $x\in X$ the bijection $\psi_x$ is an automorphism.
The subgroup of $\OP{Aut}(X,\qu)$ generated by $\psi_x$ for all $x\in X$
is called the group of inner automorphisms of $(X,\qu)$ and it is denoted
by $G_{\psi}$ or by $\OP{Inn}(X,\qu)$. Notice that a connected component
of a rack is an orbit with respect to the action of the inner automorphism
group. For this reason a connected component is sometimes called an orbit.

\begin{lemma}\label{L:con-inv}
The subset $\psi(X)\subseteq \OP{Aut}(X,\qu)$ is invariant under conjugation.
\end{lemma}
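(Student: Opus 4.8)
The plan is to prove the explicit identity $\alpha\circ\psi_x\circ\alpha^{-1}=\psi_{\alpha(x)}$ for every automorphism $\alpha\in\OP{Aut}(X,\qu)$ and every $x\in X$, which immediately yields the lemma: since $\alpha(x)\in X$, the conjugate $\alpha\circ\psi_x\circ\alpha^{-1}$ lies in $\psi(X)$, so $\psi(X)$ is preserved under conjugation by any element of $\OP{Aut}(X,\qu)$ (and in particular by its subgroup $\OP{Inn}(X,\qu)$ of inner automorphisms).

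First I would unwind what ``invariant under conjugation'' asks for. Writing $\psi(X)=\{\psi_x\mid x\in X\}$, invariance means that for every $\alpha\in\OP{Aut}(X,\qu)$ and every $x\in X$ the map $\alpha\circ\psi_x\circ\alpha^{-1}$ is again of the form $\psi_z$ for some $z\in X$. The natural candidate is $z=\alpha(x)$, and the task reduces to verifying this guess.

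Then I would carry out the one-line verification, using the algebraic description $\psi_x(y)=x\qu y$ from Definition \ref{D:geo-quandle} together with the defining multiplicativity $\alpha(a\qu b)=\alpha(a)\qu\alpha(b)$ of an automorphism. For arbitrary $y\in X$,
\[
(\alpha\circ\psi_x\circ\alpha^{-1})(y)=\alpha\bigl(x\qu\alpha^{-1}(y)\bigr)=\alpha(x)\qu y=\psi_{\alpha(x)}(y),
\]
where the middle step uses that $\alpha$ is an automorphism and that $\alpha(\alpha^{-1}(y))=y$. Since $y$ was arbitrary, this gives the claimed equality of bijections.

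There is no genuine obstacle here; the only point requiring attention is that the conjugating element must itself be an automorphism, since it is precisely the multiplicativity $\alpha(a\qu b)=\alpha(a)\qu\alpha(b)$ that is invoked, and conjugation by an arbitrary element of $\OP{Sym}(X)$ would not in general preserve $\psi(X)$. As a sanity check, specialising to $\alpha=\psi_x$ (which is an automorphism) recovers exactly axiom {\bf G1}, $\psi_x\circ\psi_y\circ\psi_x^{-1}=\psi_{\psi_x(y)}$, so the lemma is a clean extension of the defining rack axiom from inner to arbitrary automorphisms.
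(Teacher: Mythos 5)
Your proof is correct and matches the paper's argument exactly: both establish the identity $\alpha\circ\psi_x\circ\alpha^{-1}=\psi_{\alpha(x)}$ by evaluating at an arbitrary $y\in X$ and using the multiplicativity of the automorphism. Your added remarks about why the conjugator must be an automorphism and the specialisation to $\alpha=\psi_x$ recovering axiom {\bf G1} are accurate and consistent with the paper's closing comment that the lemma strengthens that axiom.
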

\begin{proof}
Let $x,y\in X$ and let $\alpha\in \OP{Aut}(X,\qu)$. We have that
\begin{align*}
\left(\alpha\circ\psi_x\circ \alpha^{-1}\right)(y) 
&= \alpha\left( \psi_x\left( \alpha^{-1}(y) \right) \right)\\
&= \alpha\left( x\qu \alpha^{-1}(y)\right)\\
&= \alpha(x)\qu y\\
&= \psi_{\alpha(x)}(y),
\end{align*}
which shows that $\alpha\circ\psi_x\circ\alpha^{-1}=\psi_{\alpha(x)}$.
This proves that $\psi(X)$ is invariant under conjugations by all automorphisms.
Notice that this is a strengthening of Axiom G1.
\end{proof}

\begin{proposition}\label{P:iso}
The automorphism group of a rack acts by isometries. More precisely,
if $\alpha\in \OP{Aut}(X,\qu)$ and $x,y\in X_s$ then
$$
d_s(x,y) = d_{\alpha(s)}(\alpha(x),\alpha(y)).
$$
\end{proposition}
\begin{proof}
Let $x,y\in X_s$ be such that $d_s(x,y)=n$. It means that there are $x_1,\ldots,x_n\in X$ such that
$$
y = \left( \psi_{x_1}^{\pm 1}\dots\psi_{x_n}^{\pm 1} \right)(x)
$$
It follows from Lemma \ref{L:con-inv} that $\alpha \circ \psi_z = \psi_{\alpha(z)}\circ \alpha$.
Applying this identity repeatedly yields the following computation.
\begin{align*}
\alpha(y) &= \alpha\left[ \left(\psi_{x_1}^{\pm 1}\dots\psi_{x_n}^{\pm 1} \right)(x)\right]\\
&= \left( \psi_{\alpha(x_1)}^{\pm 1}\dots\psi_{\alpha(x_n)}^{\pm 1} \right)(\alpha(x)),
\end{align*}
which shows that $d_{\alpha(s)}(\alpha(x),\alpha(y))\leq n$. Since $\alpha$ is invertible
we get an equality which finishes the proof.
\end{proof}

\subsection{Representability of racks}

\begin{example}\label{E:}
Let $G$ be a group, $S\subseteq G$ a subset and $\{H_s\leq G\ |\ s\in S\}$ a family
of subgroups such that $H_s\leq Z(s)$, where $Z(s)\leq G$ denotes the centraliser of $s\in S$.
Let $X = \bigsqcup_{s\in S} G/H_s$. The rack operation on $X$ is defined by
$$
xH_s\qu yH_t = xsx^{-1}yH_t.
$$
It is straightforward to verify the axioms. If, moreover, $s\in H_s$ then the above
operation defines a quandle. The rack $(X,\qu)$ defined above 
is denoted by $(G,S,\{H_s\})$.
\hfill $\diamondsuit$
\end{example}

\begin{lemma}[Joyce {\cite[Theorem 7.2]{zbMATH03744146}}]\label{L:joyce}
Every rack $(X,\qu)$ is isomorphic to a rack of the form $(G,S,\{H_s\})$.
\qed
\end{lemma}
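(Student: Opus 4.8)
The plan is to realise the representing data $(G,S,\{H_s\})$ directly from the intrinsic structure of the rack, taking $G=\OP{Inn}(X,\qu)=G_\psi$ to be the group of inner automorphisms acting on $X$. Since every connected component is a single $G$-orbit, I would first choose a basepoint $x_i$ in each component $X_i$, let $S=\{s_i:=\psi_{x_i}\}$ be the corresponding family of inner automorphisms (indexed by the components), and set $H_{s_i}:=\OP{Stab}_G(x_i)$, the stabiliser of the basepoint.

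The first thing to verify is the compatibility condition $H_s\leq Z(s)$ required in Example \ref{E:}. This is where Lemma \ref{L:con-inv} does the work: if $g\in G$ fixes $x_i$, then $g\,\psi_{x_i}\,g^{-1}=\psi_{g(x_i)}=\psi_{x_i}$, so $g$ commutes with $s_i$, i.e.\ $\OP{Stab}_G(x_i)\leq Z(s_i)$. Thus the data $(G,S,\{H_{s_i}\})$ is admissible and defines a rack in the sense of the Example. Next I would define the candidate isomorphism
$$
\Phi\colon \bigsqcup_i G/H_{s_i}\longrightarrow X,\qquad \Phi(gH_{s_i})=g(x_i).
$$
It is well defined because $H_{s_i}$ fixes $x_i$, and on each factor it is a bijection onto the component $X_i$ by the orbit--stabiliser theorem, using that $X_i$ is exactly the $G$-orbit of $x_i$; assembling over the components shows $\Phi$ is a bijection of the whole disjoint union onto $X$.

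It then remains to check that $\Phi$ intertwines the two rack operations. Unwinding the definition from Example \ref{E:} gives $gH_{s_i}\qu hH_{s_j}=g\,s_i\,g^{-1}hH_{s_j}$, so $\Phi(gH_{s_i}\qu hH_{s_j})=(g\,s_i\,g^{-1}h)(x_j)$. On the other side, $\Phi(gH_{s_i})\qu\Phi(hH_{s_j})=\psi_{g(x_i)}(h(x_j))$, and Lemma \ref{L:con-inv} again identifies $\psi_{g(x_i)}=g\,\psi_{x_i}\,g^{-1}=g\,s_i\,g^{-1}$, so the two expressions agree.

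I expect no serious obstacle here: once $G$, $S$, and the $H_s$ are chosen as above, every step is forced, and the single recurring ingredient is the conjugation formula $\alpha\circ\psi_z\circ\alpha^{-1}=\psi_{\alpha(z)}$ from Lemma \ref{L:con-inv}. The one point needing a word of care is bookkeeping: $S$ should be treated as a family indexed by the components, so that distinct components are kept distinct even if their basepoints happen to yield the same inner automorphism, matching the disjoint-union indexing in Example \ref{E:}.
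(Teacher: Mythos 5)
Your proof is correct and is essentially the standard argument behind the cited result: taking $G=\OP{Inn}(X,\qu)$, basepoints in each orbit, stabilisers as the $H_s$, and using the conjugation identity of Lemma \ref{L:con-inv} both for the containment $\OP{Stab}_G(x_i)\leq Z(\psi_{x_i})$ and for the intertwining check is exactly how Joyce's representation theorem is proved, and it carries over to racks verbatim since Lemma \ref{L:con-inv} only uses axiom G1. The paper itself offers no proof beyond the citation, and your closing remark about indexing $S$ by components (rather than as a bare subset of $G$) correctly flags the one genuine bookkeeping point in matching the data of Example \ref{E:}.
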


\begin{remark}
Joyce proved his theorem for quandles only. The proof for racks is analogous.
\end{remark}

It follows from the above definition that $G$ acts on $(X,\qu)=(G,S,\{H_s\})$ 
by automorphisms.
Moreover, the map $\psi\colon X\to \OP{Aut}(X,\qu)$ factors through $G$
as
$$
\psi_{gH_s} = gsg^{-1}.
$$
Consequently, the group of inner automorphisms $G_{\psi}$ is a subgroup of $G$ 
normally generated by the subset $S$.

\begin{lemma}\label{L:inner<G}
The group $G_{\psi}$ of inner automorphisms of a rack $(G,S,\{H_s\})$ is
normally generated by the subset $S$. In particular, if $S$ normally generates
$G$ then $G=G_{\psi}$.
\qed
\end{lemma}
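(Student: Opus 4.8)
The plan is to read off the structure of $G_\psi$ directly from the multiplication formula for the rack $(G,S,\{H_s\})$, so that the statement becomes a formal consequence of the displayed identity $\psi_{gH_s}=gsg^{-1}$ recorded just before the lemma. First I would observe that left multiplication defines a homomorphism $L\colon G\to\OP{Aut}(X,\qu)$ with $L_g(yH_t)=gyH_t$; that each $L_g$ respects $\qu$ is the same computation already used to see that $G$ acts on $(X,\qu)$ by automorphisms. The key step is then to identify the generators $\psi_x$ in terms of $L$: for $x=gH_s$ one has
$$
\psi_{gH_s}(yH_t)=gH_s\qu yH_t=gsg^{-1}yH_t=L_{gsg^{-1}}(yH_t),
$$
so that $\psi_{gH_s}=L_{gsg^{-1}}$.

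Next I would assemble these generators. By definition $G_\psi=\langle\psi_x\mid x\in X\rangle$, which by the previous step equals $\langle L_{gsg^{-1}}\mid g\in G,\ s\in S\rangle$. Since $L$ is a homomorphism, the subgroup generated by the $L_{gsg^{-1}}$ is the $L$-image of the subgroup of $G$ generated by all conjugates $gsg^{-1}$, that is, of the normal closure $N=\langle\langle S\rangle\rangle_G$. Hence $G_\psi=L(N)$, and because $L(N)=\langle\langle L(S)\rangle\rangle_{L(G)}$ the group $G_\psi$ is normally generated inside $L(G)$ by the images $L(S)=\{\psi_{H_s}\mid s\in S\}$. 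The ``in particular'' clause is then immediate: if $N=G$ then $G_\psi=L(N)=L(G)$, which under the identification of $G$ with its image $L(G)$ reads $G=G_\psi$.

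All of the above is routine, so the only point requiring care, and the one I would flag explicitly, is the precise sense in which ``$G_\psi$ is a subgroup of $G$''. The map $L$ need not be injective, its kernel being $\bigcap_{g\in G,\,t\in S}gH_tg^{-1}$, so strictly one gets $G_\psi\cong N/(N\cap\ker L)$ rather than a literal copy of $N$ inside $G$. The clean formulation is therefore that $G_\psi$ is the $L$-image of the normal closure of $S$ and is normally generated by $\psi(S)$; the phrasing as a subgroup of $G$, and the equality $G=G_\psi$ in the faithful case, are recovered only after identifying $G$ with $L(G)$.
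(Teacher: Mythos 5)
Your argument is correct and is exactly the route the paper intends: the lemma is stated with no separate proof because it is read off from the identity $\psi_{gH_s}=gsg^{-1}$ in the paragraph preceding it, i.e.\ from the fact that $\psi$ factors through the left-multiplication action $L\colon G\to\OP{Aut}(X,\qu)$, so that $G_\psi=L\bigl(\langle\langle S\rangle\rangle_G\bigr)$. Your closing caveat about $\ker L=\bigcap_{g,t}gH_tg^{-1}$ is a genuine refinement rather than pedantry --- the paper's literal phrasing ``$G_\psi$ is a subgroup of $G$'' and the conclusion $G=G_\psi$ silently assume the action is faithful on the relevant subgroup (e.g.\ for $G=\mathbf{Z}/4$, $S=\{t\}$, $H_t=\langle t^2\rangle$ one gets $G_\psi\cong\mathbf{Z}/2$ even though $S$ normally generates $G$), so your formulation of $G_\psi$ as the $L$-image of the normal closure is the correct one.
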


\subsection{The enveloping group of a rack}

\begin{definition}\label{D:env}
Let $(X,\qu)$ be a rack. The group $G_X$ defined by the presentation
$$
G_X = \langle X \ |\ x\qu y = xyx^{-1},\ x,y \in X\rangle
$$
is called the {\em enveloping group} of the rack $(X,\qu)$.
Notice that  the constant function $X\to \{1\}\subseteq \B Z$ 
defines a surjective homomorphism $G_X\to \B Z$ for
any rack. Consequently, the enveloping group is always infinite.
\end{definition}

\begin{lemma}\label{L:central}
The projection $\pi\colon G_X\to G_{\psi}$ defined by
$\pi(x) = \psi_x$ is a central extension.
If, moreover, the natural map $X\to G_X$ is injective then $\ker\pi = Z(G_X)$.
\end{lemma}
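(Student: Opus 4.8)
The plan is to extract a single conjugation identity inside $G_X$ and to read off both assertions from it. First I would record that $\pi$ is a well-defined surjection: it is well-defined because the defining relation $x\qu y = xyx^{-1}$ is sent by $\pi$ to $\psi_{x\qu y}=\psi_x\psi_y\psi_x^{-1}$, which is exactly Axiom \textbf{G1}; and it is surjective because the images $\psi_x$ of the generators generate $G_\psi$ by definition. Thus $\pi$ already fits into a short exact sequence, and the whole content of the lemma lies in analysing $\ker\pi$.

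The key step I would establish is the conjugation formula
$$
g\,\iota(y)\,g^{-1} = \iota\bigl(\pi(g)(y)\bigr)
\qquad\text{for all } g\in G_X,\ y\in X,
$$
where $\iota\colon X\to G_X$ is the natural map. For a generator $g=\iota(x)$ this is just the defining relation read as $\iota(x)\,\iota(y)\,\iota(x)^{-1}=\iota(x\qu y)=\iota(\psi_x(y))$, and the general case follows by a routine induction on the word length of $g$, together with the corresponding statement for $g^{-1}$, using that $\pi$ is a homomorphism. Conceptually this identity says that the conjugation action of $G_X$ on the set of generators is carried by $\pi$ to the tautological action of $G_\psi$ on $X$.

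From this, centrality of the kernel is immediate. If $k\in\ker\pi$ then $\pi(k)=\mathrm{id}$, so the formula gives $k\,\iota(y)\,k^{-1}=\iota(\pi(k)(y))=\iota(y)$ for every generator $y$. Hence $k$ commutes with a generating set of $G_X$, so $k\in Z(G_X)$; this proves $\ker\pi\subseteq Z(G_X)$, i.e. that $\pi$ is a central extension. For the converse inclusion under the injectivity hypothesis I would take $g\in Z(G_X)$, so that $g\,\iota(y)\,g^{-1}=\iota(y)$ for every $y$; the formula rewrites the left-hand side as $\iota(\pi(g)(y))$, whence $\iota(\pi(g)(y))=\iota(y)$ in $G_X$. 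When $\iota$ is injective this forces $\pi(g)(y)=y$ in $X$ for all $y$, that is $\pi(g)=\mathrm{id}$ and $g\in\ker\pi$. Combined with the first inclusion this yields $\ker\pi=Z(G_X)$.

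I do not expect a genuine obstacle here; the only point demanding care is the bookkeeping in the conjugation formula, namely tracking where elements of $X$ are being regarded as lying in $G_X$ via $\iota$. It is precisely this distinction that explains why injectivity is needed for the second statement: without it, $\pi(g)(y)$ and $y$ could coincide in $G_X$ while differing in $X$, so a central element need not act trivially on $X$ and the inclusion $Z(G_X)\subseteq\ker\pi$ could fail.
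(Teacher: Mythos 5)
Your proof is correct and follows essentially the same route as the paper: the paper's argument is precisely the computation $gxg^{-1}=\left(\psi_{x_1}^{\pm 1}\circ\cdots\circ\psi_{x_n}^{\pm 1}\right)(x)$ for $g=x_1^{\pm 1}\cdots x_n^{\pm 1}$, which is your conjugation formula $g\,\iota(y)\,g^{-1}=\iota(\pi(g)(y))$ written out on a word, and both directions are then read off exactly as you do. Your version is slightly more careful in making $\iota$ explicit and in checking well-definedness of $\pi$, but the substance is identical.
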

\begin{proof}
The projection $\pi$ is obviously surjective. So we need to check
that its kernel is contained in the centre of $G_X$. Let $g=x_1^{\pm 1} \dots x_n^{\pm 1}\in \ker\pi$ which
means that 
$$
\psi_{x_1}^{\pm 1}\circ \dots\circ \psi_{x_n}^{\pm 1}=\OP{Id}.
$$
Let $x\in X$. Observe that the defining relation can be written as
$$
\psi_{x_i}(x) = x_i x x_i^{-1}.
$$
It follows that
\begin{align*}
gxg^{-1} 
&= \left( x_1^{\pm 1} \dots x_n^{\pm 1} \right)\cdot x \cdot \left( x_n^{\mp 1} \dots x_1^{\mp 1} \right)\\
&= \left( \psi_{x_1}^{\pm 1}\circ \dots \circ\psi_{x_n}^{\pm 1} \right)(x) \\
&= x,
\end{align*}
which shows that $g\in Z(G_X)$ since $x\in X$ was an arbitrary generator. 

Let $X\to G_X$ be injective.
Let $g=x_1^{\pm 1}\dots x_n^{\pm 1}\in Z(G_X)$. Then
we have that $\left( \psi_{x_1}^{\pm 1}\circ \dots\circ \psi_{x_n}^{\pm 1}\right)(x)=x$ for
every $x\in X$ which implies that $\left( \psi_{x_1}^{\pm 1}\circ \dots\circ \psi_{x_n}^{\pm 1} \right)=\OP{Id}$
due to the injectivity of $X\to G_X$.
\end{proof}

\begin{remark}
The map $X\to G_X$ is never injective for racks that are not quandles. Indeed,
if $\psi_x(x)=y\neq x$ then we have that
$$
y=x\qu x = xxx^{-1}=x
$$
in $G_X$. It, moreover, follows that $\psi_x = \psi_{\psi_x^n(x)}$ for every $n\in \B Z$.
\end{remark}

\subsection{Conjugation-invariant norms on groups}\label{SS:word-norms}

Let $G$ be a group and let $S\subset G$ be a generating set. The {\em word norm}
associated with $S$ is defined by
$$
\|g\|_S = \min\left\{n\in \B N\ |\ g=s_1^{\pm 1}\dots s_n^{\pm 1},\ s_i\in S\right\}.
$$
If the subset $S\subseteq G$ is invariant under conjugations then the
norm is {\em conjugation-invariant}, that is,
$$
\|hgh^{-1}\|_S = \|g\|_S,
$$
holds for all $g,h\in G$. The {\em associated metric} is defined by $d_S(g,h) =\|g^{-1}h\|_S$.
It is left-invariant and
if the norm is conjugation-invariant then the metric is {\em bi-invariant}. That
is, both left and right multiplications are isometries of the metric. 

A group $G$ is called {\em bounded} \cite{zbMATH05526532} if every bi-invariant
metric on $G$ has finite diameter.  If a group is generated by a union of
finitely many conjugacy classes then its boundedness is equivalent to the
boundedness of the associated word metric \cite{zbMATH07757261}.  Examples of bounded
groups include S-arithmetic Chevalley groups of higher rank
\cite{zbMATH05936046}, semisimple Lie groups with finite centre,
diffeomorphism groups of compact manifolds and many others
\cite{zbMATH07757261}.

Let $G$ be equipped with a bi-invariant word metric associated with a normally generating
set $S$.
Let $H\leq G$ be a subgroup. The {\em quotient metric} on the quotient $G/H$
is defined by the distance in $G$ between the cosets. Equivalently,
\begin{align*}
d_{S}(xH,yH) 
&= \min\{d_S(xh_1,yh_2)\ |\ h_1,h_2\in H\}\\
&= \min\{d_S(x,yh)\ |\ h\in H\}\\
&= \min\{\|x^{-1}yh\|_S\ |\ h\in H\}\\
&= d_{S}(H,x^{-1}yH).
\end{align*}
In particular, $G$ acts on $G/H$ by isometries.

\begin{example}\label{E:dpsi}
It follows from Lemma \ref{L:con-inv} that $\psi(X)$ generates $G_{\psi}$ and
is invariant under conjugations by elements of ${\rm Aut}(X,\qu)$.
Consequently, the associated word norm $\|g\|_{\psi}$ on $G_{\psi}$ is
$\OP{Aut}(X,\qu)$-invariant (in particular, conjugation-invariant).  
\hfill $\diamondsuit$
\end{example}

\section{The geometry of racks}

\subsubsection*{A characterisation of the rack metric}

With the preparations from the previous section the proof of the following
theorem is fairly obvious.

\begin{theorem}\label{T:quandle-metric}
Let $(X,\qu) = (G, S, \{H_s\})$ be a rack such that $S\subseteq G$ 
normally generates~$G$. Let $X_s = G/H_s$ be a connected component of $(X,\qu)$.
The rack metric $d_s$ on $X_s$ is equivalent to the quotient metric $d_S$ on $G/H_s$.
\end{theorem}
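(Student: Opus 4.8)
The plan is to reduce both metrics to a single word norm on $G$ and read them off from the same data. The key preliminary observation is that, under the representation $\psi_{gH_t}=gtg^{-1}$, the image $\psi(X)$ equals the conjugacy closure $S^{G}=\{gsg^{-1}\mid g\in G,\ s\in S\}$ of the normally generating set $S$. Since $S$ normally generates $G$, Lemma~\ref{L:inner<G} gives $G=G_{\psi}$, and by Example~\ref{E:dpsi} the associated word norm $\|\cdot\|_{\psi}=\|\cdot\|_{S^{G}}$ is conjugation-invariant. This is precisely the bi-invariant word norm attached to $S$ that is used in Section~\ref{SS:word-norms} to define the quotient metric $d_S$ on $G/H_s$, so the two sides of the claimed equivalence are already built from the same norm.

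Next I would determine how the generators act on the component $X_s=G/H_s$. Writing $x_i=g_iH_{t_i}$, the rack operation gives $\psi_{x_i}(xH_s)=g_it_ig_i^{-1}\,xH_s$, so that $\psi_{x_i}$ acts by left multiplication by the generator $w_i:=g_it_ig_i^{-1}\in S^{G}$, and $\psi_{x_i}^{-1}$ by left multiplication by $w_i^{-1}$. Composing, a rack path of length $n$ reads
$$
\bigl(\psi_{x_1}^{\pm1}\circ\cdots\circ\psi_{x_n}^{\pm1}\bigr)(xH_s)=w_1^{\pm1}\cdots w_n^{\pm1}\,xH_s ,
$$
with each $w_i\in S^{G}$. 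Hence $d_s(xH_s,yH_s)\le n$ exactly when some product $g=w_1^{\pm1}\cdots w_n^{\pm1}$ of $n$ generators $w_i^{\pm1}$ satisfies $yH_s=g\,xH_s$, i.e. $x^{-1}g^{-1}y\in H_s$, equivalently $g=yh^{-1}x^{-1}$ for some $h\in H_s$. Minimising the length over all such expressions yields the closed formula
$$
d_s(xH_s,yH_s)=\min\bigl\{\,\|yhx^{-1}\|_{S^{G}}\ \big|\ h\in H_s\,\bigr\},
$$
where I have used that $h^{-1}$ ranges over $H_s$ as $h$ does.

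Finally I would match this with the quotient metric using bi-invariance. Conjugation-invariance of $\|\cdot\|_{S^{G}}$ gives $\|yhx^{-1}\|_{S^{G}}=\|x^{-1}(yhx^{-1})x\|_{S^{G}}=\|x^{-1}yh\|_{S^{G}}$, so the displayed minimum equals $\min\{\|x^{-1}yh\|_{S^{G}}\mid h\in H_s\}$, which is exactly $d_S(xH_s,yH_s)$ by the computation in Section~\ref{SS:word-norms}. This shows that the two metrics in fact coincide, and in particular are equivalent. I expect the only delicate point to be the bookkeeping in the middle step: correctly tracking the order of composition of the $\psi_{x_i}$ as left multiplications on cosets, and verifying that the $H_s$-coset ambiguity appearing on the rack side lines up, via conjugation-invariance, with the $H_s$-minimisation that defines the quotient metric. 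Everything else is the routine identification $\psi(X)=S^{G}$ established above.
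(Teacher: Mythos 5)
Your proof is correct and follows essentially the same route as the paper: unfold the rack metric on $G/H_s$ into products of conjugates $g_it_ig_i^{-1}$ of elements of $S$ acting by left multiplication, identify the resulting minimum with the bi-invariant word norm of coset representatives, and match this with the quotient metric (in fact both arguments show the metrics are equal, not merely equivalent). The only cosmetic difference is that the paper computes the distance to the basepoint $H_s$ and then invokes $G$-invariance of both metrics, whereas you treat arbitrary cosets directly and use conjugation-invariance of the norm to rewrite $\|yhx^{-1}\|$ as $\|x^{-1}yh\|$.
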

\begin{proof}
Let $x\in G$.
\begin{align*}
d(xH_s,H_s)  
&= \min\left\{n\in \B N\ |\ xH_s = \left( \psi_{x_1H_{s_1}}^{\pm 1}\dots \psi_{x_nH_{s_n}}^{\pm 1} \right)(H_s)\right\}\\
&= \min\left\{n\in \B N\ |\ xH_s = x_1{s_1}^{\pm 1}x_1^{-1}\dots x_ns_n^{\pm 1}x_n^{-1}H_s\right\}\\
&= \min\left\{\|g\|_S \ |\ xH_s = gH_s\right\}\\
&= \min\left\{\|xh\|_S \ |\ h\in H\right\}\\
&= d_S(xH_s,H_s)
\end{align*}
Since both metrics are $G$-invariant, the above computation proves the statement.
\end{proof}

\subsubsection*{A canonical rack-quandle extension}

Let $(X,\qu)$ be a rack and let $y=\psi_x(x)$. We have that
$$
\psi_y = \psi_{\psi_x(x)}=\psi_x\cdot\psi_x\cdot\psi_x^{-1}=\psi_x.
$$
It follows that if $y=\psi_x^n(x)$ then $\psi_y=\psi_x$ for any $n\in \B Z$.
Call $x$ and $y$ equivalent if $y = \psi_x^n(x)$ for some $n\in \B Z$. It is
straightforward to verify that it is an equivalence relation. Let
$\underline{X}=X/\!\approx$ be the quotient. The rack product descends
to the quotient. That is,
$$
[x]\qu [y] = [x\qu y]
$$
is well defined and defines a quandle structure on $\underline{X}$.
We call $X\to \underline{X}$ the {\em canonical rack-quandle extension}.
Notice that this extension maps connected components of the rack to 
connected components of the quandle.

\begin{lemma}\label{L:r-ext-q}
The canonical rack-quandle extension is $1$-Lipschitz. That is,
$$
d_{\underline{X}}([x],[y]) \leq d_{X}(x,y),
$$
for every $x,y\in X$. 
\end{lemma}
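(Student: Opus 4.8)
The plan is to exploit the fact that the quotient map $q\colon X\to\underline{X}$, $x\mapsto[x]$, intertwines the two families of bijections $\psi_z$ and $\psi_{[z]}$. First I would establish the equivariance relation
$$
q\circ\psi_z = \psi_{[z]}\circ q\qquad\text{for every }z\in X,
$$
which is immediate from the way the quandle structure on $\underline{X}$ was defined: for any $y\in X$ one computes
$$
\psi_{[z]}(q(y)) = [z]\qu[y] = [z\qu y] = [\psi_z(y)] = q(\psi_z(y)).
$$
Composing this identity on the left with $\psi_{[z]}^{-1}$ and on the right with $\psi_z^{-1}$ produces the companion relation $q\circ\psi_z^{-1}=\psi_{[z]}^{-1}\circ q$, so that $q$ intertwines the generating bijections and their inverses alike.

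With this in hand I would deduce the inequality by transporting a geodesic word across $q$. I may assume $d_X(x,y)$ is finite, since otherwise (when $x,y$ lie in distinct connected components) the right-hand side is infinite and there is nothing to prove. So suppose $d_X(x,y)=n$, meaning there are elements $x_1,\dots,x_n\in X$ and signs $\varepsilon_i\in\{+1,-1\}$ with
$$
y = \left(\psi_{x_1}^{\varepsilon_1}\circ\dots\circ\psi_{x_n}^{\varepsilon_n}\right)(x).
$$
Applying $q$ to both sides and pushing it through each factor by means of the equivariance relation (used $n$ times) gives
$$
[y] = q(y) = \left(\psi_{[x_1]}^{\varepsilon_1}\circ\dots\circ\psi_{[x_n]}^{\varepsilon_n}\right)([x]).
$$
Since each $[x_i]$ is a legitimate element of $\underline{X}$, this exhibits $[y]$ as the image of $[x]$ under a word of length $n$ in the generating bijections of $\underline{X}$, whence $d_{\underline{X}}([x],[y])\le n = d_X(x,y)$.

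There is essentially no hard step here; the only point requiring care is the bookkeeping of the signs $\varepsilon_i$ when passing $q$ through the inverses $\psi_{x_i}^{-1}$, which is precisely why I isolate the inverse form of the equivariance relation at the outset rather than rely on the bare axiom $[x]\qu[y]=[x\qu y]$. I would also remark that the displayed computation simultaneously reconfirms that $q$ carries connected components of the rack into connected components of the quandle, so that no separate verification of the finiteness of $d_{\underline{X}}([x],[y])$ is needed once $x$ and $y$ are taken in a common component.
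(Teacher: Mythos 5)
Your proof is correct and follows essentially the same route as the paper's: the paper also takes a length-$n$ expression $y=\bigl(\psi_{x_1}^{\pm 1}\dots\psi_{x_n}^{\pm 1}\bigr)(x)$ and pushes it to $[y]=\bigl(\psi_{[x_1]}^{\pm 1}\dots\psi_{[x_n]}^{\pm 1}\bigr)[x]$. You merely make explicit the equivariance identity $q\circ\psi_z^{\pm 1}=\psi_{[z]}^{\pm 1}\circ q$ that the paper leaves as an "it follows," which is a reasonable bit of added care.
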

\begin{proof}
Suppose that $d_X(x,y)=n$. Then there elements exist $x_1,\ldots, x_n\in X$ such that
$y=\left( \psi_{x_1}^{\pm 1}\dots\psi_{x_n}^{\pm 1} \right)(x)$. It follows that
$$
[y] =\left( \psi_{[x_1]}^{\pm}\dots\psi_{[x_n]}^{\pm 1} \right)[x],
$$
which implies that $d_{\underline{X}}([x],[y])\leq n$.
\end{proof}

\subsubsection*{Bounded racks}

\begin{definition}\label{D:bounded}
If the diameter of the rack metric on each connected component of a rack $(X,\qu)$
is finite then $(X,\qu)$ is called {\em bounded}. Otherwise, it is called
{\em unbounded}.
\end{definition}
In the rack language this means that there exists a number $N>0$ such
that for every $x,y\in X_s$ there exist $x_1,\ldots,x_n\in X$ with $n\leq N$
such that
$$
y =x_n\qu (x_{n-1}\qu (\dots (x_1\qu x)\dots)).
$$
It follows from Lemma \ref{L:r-ext-q} that if the rack in the canonical 
rack-quandle extension is bounded then
so is the underlying quandle. Conversely, if the underlying quandle 
is unbounded then so is the rack.

\begin{corollary}\label{C:bounded}
Let $(X,\qu) = (G,S,\{H_s\})$ be a rack. If the group $G$ is bounded then
the rack $(X,\qu)$ is bounded. \qed
\end{corollary}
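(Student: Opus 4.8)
The plan is to read off finiteness of the diameter of each connected component directly from Theorem \ref{T:quandle-metric}. By Definition \ref{D:bounded} the rack $(X,\qu)$ is bounded exactly when every connected component $X_s = G/H_s$ has finite diameter in its rack metric $d_s$, so it suffices to bound $\OP{diam}(X_s,d_s)$ for each $s\in S$.

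First I would apply Theorem \ref{T:quandle-metric}, whose proof in fact establishes the equality $d_s(xH_s,H_s)=d_S(xH_s,H_s)$, identifying the rack metric on $X_s$ with the quotient metric $d_S$ on $G/H_s$; in particular $\OP{diam}(X_s,d_s)=\OP{diam}(G/H_s,d_S)$. Next I would use that the quotient metric never exceeds the ambient metric: since $d_S(xH_s,yH_s)=\min\{\|x^{-1}yh\|_S \mid h\in H_s\}\le \|x^{-1}y\|_S=d_S(x,y)$, one obtains $\OP{diam}(G/H_s,d_S)\le \OP{diam}(G,d_S)$. Finally, by Example \ref{E:dpsi} the generating set $\psi(X)$ is conjugation-invariant, so $d_S$ is a genuine bi-invariant metric on $G$; the hypothesis that $G$ is bounded then gives $\OP{diam}(G,d_S)<\infty$, since a bounded group has finite diameter in every bi-invariant metric. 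Chaining the three steps yields $\OP{diam}(X_s,d_s)<\infty$ for every $s$, which is the claim.

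The one point to watch, and the reason the statement is phrased as a corollary of Theorem \ref{T:quandle-metric}, is that $d_S$ is a finite-valued bi-invariant metric on all of $G$ only when $\psi(X)$ generates $G$, that is, when $S$ normally generates $G$ and hence $G=G_{\psi}$; this is precisely the standing hypothesis of that theorem, under which the chain above is valid verbatim. If instead $S$ fails to normally generate $G$, the connected components are orbits of the proper normal subgroup $G_{\psi}=\langle\psi(X)\rangle$, and the same computation only bounds each component by $\OP{diam}(G_{\psi},d_S)$; closing the argument then requires applying the boundedness hypothesis to $G_{\psi}$ rather than to $G$, which one arranges by rewriting the representation as $(G_{\psi},S,\{G_{\psi}\cap H_s\})$. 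This bookkeeping over which group actually carries the bi-invariant word metric is the only mild obstacle, since all the geometric content already resides in Theorem \ref{T:quandle-metric}.
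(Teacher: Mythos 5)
Your argument is correct and is exactly the intended one: the corollary is stated with no proof precisely because it follows by combining Theorem \ref{T:quandle-metric} with the observations that the quotient metric on $G/H_s$ is dominated by the bi-invariant word metric $d_S$ on $G$ and that boundedness of $G$ makes the latter have finite diameter. Your closing remark about the case where $S$ fails to normally generate $G$ is a sensible piece of bookkeeping that the paper glosses over (its subsequent corollaries all assume normal generation), but it does not change the substance of the proof.
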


In what follows we specify the above corollary to classes of groups that are
well known to be bounded.

\begin{corollary}\label{C:chevalley}
Let $(X,\qu) =(\Gamma,S,\{H_s\})$, where $\Gamma$ is an $S$-arithmetic
Chevalley group of rank at least $2$, $S\subseteq \Gamma$ is a set of root
elements normally generating $\Gamma$ (such a set can be chosen finite) and
$H_s \subseteq Z(s)$. Then the rack $(X,\qu)$ is bounded.
\qed
\end{corollary}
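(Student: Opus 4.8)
The plan is to obtain this as a direct specialisation of Corollary \ref{C:bounded}, so that the only thing genuinely requiring verification is that $\Gamma$ is a bounded group in the sense of Section \ref{SS:word-norms}. First I would observe that, since the set $S$ of root elements normally generates $\Gamma$, Lemma \ref{L:inner<G} identifies the group of inner automorphisms $G_\psi$ with $\Gamma$ itself. In particular the conjugation-invariant generating set $\psi(X)$ of $G_\psi$ (see Example \ref{E:dpsi}) is the union of the conjugacy classes of the finitely many elements of $S$, hence a union of finitely many conjugacy classes.

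I would then invoke the boundedness criterion recalled in Section \ref{SS:word-norms}: for a group generated by a union of finitely many conjugacy classes, boundedness is equivalent to the finiteness of the diameter of the associated word metric. It therefore suffices to know that the word metric on $\Gamma$ associated with $\psi(X)$ has finite diameter, and this is exactly the strong boundedness of higher-rank $S$-arithmetic Chevalley groups supplied by the cited result \cite{zbMATH05936046}. Granting this, $\Gamma$ is bounded, and Corollary \ref{C:bounded} at once gives that the rack $(X,\qu) = (\Gamma, S, \{H_s\})$ is bounded.

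The main --- and essentially the only --- obstacle is the boundedness of $\Gamma$, which is a deep external input rather than anything proved here: it rests on the fact that in a Chevalley group of rank at least $2$ every element is a product of a uniformly bounded number of root elements. Everything else is a formal application of machinery already in place. Indeed, Theorem \ref{T:quandle-metric}, on which Corollary \ref{C:bounded} rests, translates the rack metric on each component $G/H_s$ into the quotient of the bi-invariant word metric $d_S$ on $\Gamma$, and a quotient of a finite-diameter metric again has finite diameter. The hypothesis $H_s \subseteq Z(s)$ does no work beyond guaranteeing, via Example \ref{E:}, that $(\Gamma, S, \{H_s\})$ is a well-defined rack to begin with.
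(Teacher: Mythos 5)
Your proposal is correct and follows essentially the same route as the paper, which leaves this corollary without a written proof precisely because it is the direct specialisation of Corollary \ref{C:bounded} (via Theorem \ref{T:quandle-metric}) to the case where $\Gamma$ is bounded, with the boundedness of higher-rank $S$-arithmetic Chevalley groups imported from \cite{zbMATH05936046}. Your additional remarks on $G_\psi=\Gamma$ and on $\psi(X)$ being a finite union of conjugacy classes are accurate and consistent with Lemma \ref{L:inner<G} and Example \ref{E:dpsi}.
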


\begin{example}\label{E:bounded}
Consider a rack
$$
\left( \OP{SL}(n,\B Z), E_{12}, Z(E_{12}) \right),
$$
where $n\geq 3$ and $E_{12}$ denotes the elementary matrix with entries $e_{ii}=e_{12}=1$ and
zero otherwise. Notice that the above rack is a quandle.
Since the conjugacy class of $E_{12}$ generates $\OP{SL}(n,\B Z)$ the above quandle
is connected. It is bounded, since $\OP{SL}(n,\B Z)$ is bounded for $n\geq 3$.
\hfill $\diamondsuit$
\end{example}

\begin{corollary}\label{C:Lie}
Let $(X,\qu)=(G,S,\{H_s\})$, where $G$ is a semisimple Lie group with finite centre,
$S$ is a normal generating set (which can be chosen finite) and $H_s\leq Z(s)$.
Then the rack metric on each connected component of $(X,\qu)$ has finite diameter.
\end{corollary}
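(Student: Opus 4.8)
The plan is to reduce the statement to the boundedness of the ambient group $G$ and then to quote the translation between group-boundedness and rack-boundedness already in place. The one piece of genuine content is external: a semisimple Lie group with finite centre is a bounded group in the sense of Subsection \ref{SS:word-norms}. This is one of the examples recorded there, with reference \cite{zbMATH07757261}, and I would take it as given. The finiteness of the centre is essential: the universal cover of $\OP{SL}(2,\B R)$, whose centre is infinite cyclic, carries unbounded bi-invariant metrics and is not bounded.

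Granting this, I would first note that the hypotheses match those of Corollary \ref{C:bounded} exactly. The data $(X,\qu)=(G,S,\{H_s\})$ with $H_s\le Z(s)$ is a rack of the representable form, and $G$ is bounded by the previous paragraph. Since $S$ normally generates $G$, Lemma \ref{L:inner<G} identifies $G_{\psi}$ with $G$, so the relevant group of inner automorphisms is the bounded group itself. Because $S$ may be taken finite, the conjugation-invariant generating set $\psi(X)=\{gsg^{-1}\mid g\in G,\ s\in S\}$ (Example \ref{E:dpsi}) is a union of finitely many conjugacy classes; this is precisely the condition under which boundedness of $G$ forces the associated bi-invariant word metric to have finite diameter.

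The last step transports finite diameter from the group to each orbit. By Theorem \ref{T:quandle-metric} the rack metric $d_s$ on the component $X_s=G/H_s$ is equivalent to the quotient metric induced by that bi-invariant word metric, and a metric quotient of a finite-diameter space again has finite diameter. Hence every connected component of $(X,\qu)$ has finite diameter, as claimed. In effect the whole argument is the specialisation of Corollary \ref{C:bounded} to this class of groups, so once boundedness of $G$ is recorded the statement follows immediately.

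I do not expect any obstacle internal to the paper: all the required machinery (Corollary \ref{C:bounded}, Theorem \ref{T:quandle-metric}, Lemma \ref{L:inner<G}, Example \ref{E:dpsi}) is already available, and the deduction is purely formal. The only substantive input is the cited boundedness of semisimple Lie groups with finite centre, and making that self-contained --- via the uniform commutator-width and boundedness results behind \cite{zbMATH07757261} --- would be the hard part if one did not wish simply to quote it.
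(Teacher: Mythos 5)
Your proposal is correct and follows exactly the route the paper intends: the corollary is a direct specialisation of Corollary \ref{C:bounded} (via Theorem \ref{T:quandle-metric} and Lemma \ref{L:inner<G}), with the only substantive input being the cited boundedness of semisimple Lie groups with finite centre. The paper gives no separate argument beyond this, so there is nothing to add.
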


\begin{example}\label{E:inoue}
Let $\C P\subseteq \OP{PSL}(2,\B C)$ be the quandle consisting of all parabolic elements.
It can be represented as $(\OP{PSL}(2,\B C),S,\{Z(s)\})$, where $S\subseteq \OP{PSL}(2,\B C)$
is a set of representatives of conjugacy classes of parabolic elements. 
Inoue and Kabaya \cite{zbMATH06343943} used the cohomology of this quandle to compute
the complex volume of hyperbolic links. It follows from Theorem \ref{T:quandle-metric}
and boundedness of $\OP{PSL}(2,\B C)$ that the quandle $\C P$ is bounded.
\hfill $\diamondsuit$
\end{example}

\begin{example}\label{E:Lie}
Let $G$ be a bounded simple group and let $1\neq g\in G$. Since the conjugacy
class of $g$ generates $G$, due to simplicity, we get that the rack $(G,g, H_g)$,
where $H_g\subseteq Z(g)$,
is bounded. This, for example, holds for simple Lie groups or (the commutator subgroups of)
Higman-Thompson groups \cite{zbMATH06790225,zbMATH07757261}.
\hfill $\diamondsuit$
\end{example}

All the examples above rely of the fact that the group of inner automorphisms
of the rack is bounded. However, for the boundedness of a connected component
$G/H_s$ is is enough that the embedding $H_s\subseteq G$ is {\em coarsely
surjective}. The latter means that there exists a number $N>0$ such that for
every $g\in G$ there exists $h\in H_s$ with $d_S(g,h)\leq N$.

\begin{example}\label{E:Sp}
Let $\B Z\to \widetilde{\OP{Sp}}(2n;\B Z)\to \OP{Sp}(2n;\B Z)$ be a nontrivial
central extension of the integral symplectic group. This extension is unbounded
and the inclusion of the centre is coarsely surjective 
\cite{zbMATH05936046}. Consequently,
every connected component $\widetilde{\OP{Sp}}(2n;\B Z)/H_s$ of a rack $\left(
\widetilde{\OP{Sp}}(2n;\B Z),S,\{H_s\} \right)$ such that $H_s$ contains the
centre of $\widetilde{\OP{Sp}}(2n;\B Z)$ has finite diameter.

\hfill $\diamondsuit$
\end{example}

\subsubsection*{Unbounded racks and quandles}

\begin{example}\label{E:Zrack}
Let $(\B Z,\qu)$ be a rack defined by $k\qu \ell = \ell +1$. It is connected
and isometric to $\B Z$ with the standard metric, hence unbounded.
\hfill $\diamondsuit$
\end{example}

\begin{example}\label{E:F2}
Let $\B F_2 = \langle x,y\rangle$ be the free group on two generators.
Let $(X,\qu) = (\B F_2,\{x,y\},\{Z(x),Z(y)\})$ be a quandle. It is a union
of conjugacy classes of $x$ and of $y$ and each conjugacy class is a connected
component. Since $Z(x) = \langle x\rangle$ is the cyclic subgroup generated by
$x$ its inclusion into $\B F_2$ is not coarsely surjective (for example 
$d_{\{x,y\}}(Z(x),y^n)=n$) and hence the quandle is unbounded. 
\hfill $\diamondsuit$
\end{example}

\begin{proposition}\label{P:Fn}
Let $\B F_n$ be the free group of rank $n\geq 2$ and let
$(X,\qu) = (\B F_n,S,\{H_s\})$, where $S\subseteq \B F_n$ is finite normally generating subset.
Then each connected component $X_s$ of $(X,\qu)$ has infinite diameter.
\end{proposition}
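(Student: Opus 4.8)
The plan is to reduce the geometric statement to a question about the bi-invariant word metric on the free group $\B F_n$, using Theorem \ref{T:quandle-metric}. By Lemma \ref{L:inner<G}, since $S$ normally generates $\B F_n$, the group of inner automorphisms coincides with $\B F_n$ itself, and by Theorem \ref{T:quandle-metric} the rack metric $d_s$ on each component $X_s = \B F_n/H_s$ is equivalent to the quotient metric $d_S$ induced by the bi-invariant word norm $\|\cdot\|_S$. Thus it suffices to show that $\B F_n/H_s$ has infinite diameter in this quotient metric, equivalently that the inclusion $H_s \hookrightarrow \B F_n$ is \emph{not} coarsely surjective. Concretely, I must produce for every $N$ an element $g \in \B F_n$ with $\min_{h\in H_s}\|gh^{-1}\|_S > N$.

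First I would exhibit a single homogeneous quasimorphism (or honest homomorphism) $\phi\colon \B F_n \to \B R$ that is Lipschitz with respect to $\|\cdot\|_S$ but unbounded on $\B F_n/H_s$. The natural candidate comes from the abelianisation: since $S$ is \emph{finite}, the word norm $\|g\|_S$ dominates (up to a constant) the absolute value of any fixed homomorphism $\B F_n \to \B Z$, because each generator $s_i$ has bounded image. Because $n \geq 2$, the abelianisation $\B Z^n$ has rank at least two, giving plenty of room; but to get unboundedness on the quotient I need a homomorphism $\phi$ that \emph{vanishes} on $H_s$ yet is unbounded on $\B F_n$. The key point is that $H_s \le Z(s)$ is contained in the centraliser of a single element $s$, and in a free group centralisers of nontrivial elements are cyclic (generated by a single primitive root), hence $H_s$ is cyclic of rank at most one. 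A subgroup of rank $\le 1$ in $\B F_n$ with $n\ge 2$ cannot surject onto the full abelianisation, so there is a nonzero homomorphism $\psi\colon \B Z^n \to \B Z$ killing the image of $H_s$; composing with abelianisation yields the required $\phi$.

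With such a $\phi$ in hand the estimate is immediate: for $h\in H_s$ one has $\phi(gh^{-1}) = \phi(g)$, and Lipschitzness gives $|\phi(g)| = |\phi(gh^{-1})| \le C\,\|gh^{-1}\|_S$, so $\min_{h\in H_s}\|gh^{-1}\|_S \ge |\phi(g)|/C$. Choosing $g = a^m$ where $a$ is a generator with $\phi(a)\neq 0$ and letting $m\to\infty$ forces the distance $d_S(gH_s, H_s)$ to infinity, proving infinite diameter. By $G$-invariance of both metrics this suffices for all of $X_s$.

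The main obstacle, and the step deserving genuine care, is the structural claim that $H_s$ has rank at most one so that it misses some rational character of the abelianisation. This relies on the classical fact that centralisers of nontrivial elements in a free group are infinite cyclic; I would cite this rather than reprove it, but I must handle the degenerate cases cleanly. If $s$ is trivial then $Z(s)=\B F_n$ and the argument above collapses, so I should note that a normally generating \emph{set} $S$ of $\B F_n$ cannot have the identity as an essential member in a way that trivialises the quandle, or more carefully argue component by component: the claim is about each component $X_s$ with $s\in S$, and one only needs $H_s$ (which is assumed $\le Z(s)$) to be a proper-rank subgroup. I would also double-check that the equivalence of metrics in Theorem \ref{T:quandle-metric} genuinely transports the infinite-diameter property, which it does since bi-Lipschitz equivalence preserves boundedness.
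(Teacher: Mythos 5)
Your proposal is correct and follows essentially the same route as the paper: reduce via Theorem \ref{T:quandle-metric} to showing the cyclic centraliser $Z(s)\supseteq H_s$ is not coarsely surjective, then push forward to the abelianisation (Lipschitz since $S$ is finite) where a rank-one subgroup cannot be coarsely dense in $\B Z^n$ for $n\ge 2$. The only cosmetic difference is that you extract a character of $\B Z^n$ vanishing on the image of $H_s$, whereas the paper directly estimates the distance from $\pi(Z(g))$ to powers of a generator; your explicit flagging of the degenerate case $s=1$ is a point of care the paper leaves implicit.
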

\begin{proof}
It is enough to show that the embedding
the cetraliser $Z(g)$ of any nontrivial element $g\in \B F_n$ cannot be coarsely surjective.
First observe that the centraliser $Z(g)$ is a cyclic subgroup containing $g$.
The inclusion $Z(g)\to \B F_n$ is never coarsely surjective which can be seen as follows.

Let $\pi\colon \B F_n\to \B Z^n$ be the abelianisation. The projection is Lipschitz with constant
$1$, provided $\B Z^n$ is equipped with the word metric associated with the (finite) generating
set $\pi(S)$. This metric is Lipschitz equivalent to the standard word metric. 
Let $s_1,s_2\in S$ be two generators such that their images $\pi(s_1)$ and $\pi(s_2)$ generate
a free abelian subgroup of rank $2$.
The image $\pi(Z(g))\leq \B Z^n$ is cyclic and hence the distance of either $\pi(s_1^k)$ or $\pi(s_2^k)$
from $\pi(Z(g))$ grows linearly with $k\in \B N$. Since the projection is Lipschitz the same is
true for the distance between $Z(g)$ and $s_1$ or $s_2$ in $\B F_n$.
\end{proof}

The following proposition deals with free products of quandles. See
\cite[Section 7]{zbMATH07227846} for a definition of a free product of quandles
as well as their presentations. Notice, for example, that the free quandle on
$n$ generators is the free product of $n$ copies of the trivial quandle.

\begin{proposition}\label{P:free-product}
Let $(X_i,\qu)$ for $i=1,2$ be quandles with finitely many connected components and
such that the maps $X_i\to G_{X_i}$ are injective.
Let $A_i\subseteq X_i$ be the set of representatives of connected components of $X_i$.
Then every connected component of the free product $X_1* X_2$ has infinite diameter. 
\end{proposition}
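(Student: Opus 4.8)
The plan is to reduce the statement, via Theorem \ref{T:quandle-metric}, to a coarse-geometric statement about the group of inner automorphisms, and then to exhibit an explicit homomorphism to $\B Z$ that witnesses infinite diameter. The first task is to identify the ambient group. The enveloping group functor $X\mapsto G_X$ is left adjoint to the functor sending a group to its conjugation quandle, since a group homomorphism $G_X\to H$ is the same data as a quandle map $X\to\OP{Conj}(H)$; hence it preserves coproducts, and as the coproduct of quandles is the free product while the coproduct of groups is the free product of groups, one obtains a natural isomorphism
$$
G_{X_1 * X_2}\cong G_{X_1}*G_{X_2}.
$$
Write $G=G_{X_1}*G_{X_2}$. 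The injectivity hypotheses on $X_1\to G_{X_1}$ and $X_2\to G_{X_2}$ are exactly what is needed, together with the description of the free product of quandles in \cite[Section 7]{zbMATH07227846}, to conclude that $X_1*X_2\to G$ is injective as well. By Lemma \ref{L:central} the kernel of $G\to G_{\psi}$ is then the centre of $G$, which is trivial for a free product of two nontrivial groups (both factors being infinite by Definition \ref{D:env}), so $G_{\psi}=G$. Under this identification $X_1*X_2$ is the union of the $G$-conjugacy classes of the elements $a\in A_1\cup A_2$, these classes are precisely the connected components, and $\psi(X_1*X_2)$ is the union of these finitely many conjugacy classes. In particular a component $X_s$ equals $G/H_s$ with $H_s\leq Z_G(s)$ and $s=a$ lying in one of the two factors.

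Next I would locate $H_s$ inside a factor. Fix a component and suppose its representative $a$ lies in $X_1\subseteq G_{X_1}$; note that $a\neq 1$ in $G$ since the augmentation of Definition \ref{D:env} sends it to $1\in\B Z$. A classical fact about free products is that the centraliser in $G_{X_1}*G_{X_2}$ of a nontrivial element of the factor $G_{X_1}$ is contained in $G_{X_1}$; hence $H_s\leq Z_G(a)\subseteq G_{X_1}$.

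Now for the witness. Let $\varepsilon\colon G_{X_2}\to\B Z$ be the augmentation of Definition \ref{D:env}, and let $\chi\colon G=G_{X_1}*G_{X_2}\to\B Z$ be the homomorphism that is trivial on the factor $G_{X_1}$ and equal to $\varepsilon$ on the factor $G_{X_2}$. Being a homomorphism to an abelian group, $\chi$ is conjugation invariant, so on the generating set $\psi(X_1*X_2)$ it takes only the finitely many values $\chi(a)$ for $a\in A_1\cup A_2$; setting $M=\max_a|\chi(a)|$ one gets $|\chi(g)|\leq M\,\|g\|_{\psi}$ for all $g$, where $\|\cdot\|_{\psi}$ is the norm associated with $\psi(X_1*X_2)$ as in Example \ref{E:dpsi}. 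Since $H_s\subseteq G_{X_1}$ we have $\chi|_{H_s}=0$, while for any $b\in X_2$ we have $\chi(b^n)=n$. Thus for every $h\in H_s$,
$$
\|b^n h\|_{\psi}\ \geq\ \frac{|\chi(b^n h)|}{M}\ =\ \frac{n}{M},
$$
so the quotient distance satisfies $d_{\psi}(H_s,b^nH_s)\geq n/M\to\infty$. By Theorem \ref{T:quandle-metric} the rack metric on $X_s$ is equivalent to this quotient metric, so $X_s$ has infinite diameter; for a component whose representative lies in $X_2$ one runs the identical argument with the two factors exchanged.

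The step I expect to be the main obstacle is the first one: pinning down the isomorphism $G_{X_1*X_2}\cong G_{X_1}*G_{X_2}$ together with the embedding of the free product quandle and the resulting description of its components and their centralisers, which is precisely where the hypothesis $X_i\hookrightarrow G_{X_i}$ and the presentation of \cite{zbMATH07227846} really enter. Once the ambient group is recognised as a free product, the remainder is the elementary observation that a free factor carries a homomorphism to $\B Z$ vanishing on the complementary factor, combined with the norm estimate above. I note that the same scheme goes through verbatim with any homogeneous quasimorphism of $G$ vanishing on one factor in place of $\chi$, which is the route one would take if one wished to weaken the hypotheses.
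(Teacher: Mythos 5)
Your proposal is correct and follows essentially the same route as the paper: identify $X_1*X_2$ with $(G_{X_1}*G_{X_2},A_1\cup A_2,\{Z_G(a)\})$ via the results of \cite{zbMATH07227846}, observe that the free product of the (infinite) enveloping groups has trivial centre so that it coincides with $\OP{Inn}(X_1*X_2)$, and then show that the centraliser of a representative, which lies in one free factor, is not coarsely surjective. The only difference is cosmetic: where the paper composes with the surjection onto $\B F_2=\langle u_1,u_2\rangle$ and invokes the abelianisation argument of Proposition \ref{P:Fn}, you map directly to $\B Z$ by killing one factor and augmenting the other, which is a slightly more economical way of running the same Lipschitz estimate.
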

\begin{proof}

Recall that the enveloping group $G_{X_i}$ is infinite (see Definition
\ref{D:env}).  It follows from \cite[Theorem 7.2]{zbMATH07227846} that the free
product $X_1*X_2$ is isomorphic to the quandle $(G_{X_1}*G_{X_2},A_1\cup A_2)$;
the latter denotes a quandle defined in \cite[Section 4]{zbMATH07227846}.
Moreover, $G=G_{X_1}*G_{X_2}$ is the enveloping group of $X_1*X_2$ by
\cite[Lemma 7.1]{zbMATH07227846}. Furthermore, Proposition 4.3 of the same
paper implies that this quandle is isomorphic to $(G_{X_1}*G_{X_2},A_1\cup
A_2,\{Z_G(a_1),Z_G(a_2)\})$.  Since both $G_{X_i}$ are infinite, their free
product has trivial centre and we have an isomorphism $G_{X_1}*G_{X_2} \cong
\OP{Inn}(X_1*X_2)$. Thus in order to prove the statement it suffices to show
that the inclusion of the centraliser $Z_G(a_1)$ or $Z_G(a_2)$, where $a_i\in
A_i$, is not coarsely surjective.

To see this, notice that $Z_G(a_i)\leq G_{X_i}\leq G_{X_1}*G_{X_2}$ and
consider the surjective homomorphism $\varphi\colon G_{X_1}*G_{X_2} \to \B F_2
=\langle u_1,u_2\rangle$ defined by $x_i\to u_i$. Notice that this homomorphism
is Lipschitz with respect to the bi-invariant metrics associated with $A_1\cup
A_2$ and $\{u_1,u_2\}$, due to the finiteness of $A_i$.  If the inclusion of
the centraliser $Z_G(a_i)$ was coarsely surjective then the composition with
$\varphi$ would be coarsely surjective. However, $\varphi(Z(a_i))\leq \langle
u_i\rangle$ and hence it is not coarsely surjective. This shows that every
connected component of $X_1* X_2$ has infinite diameter.
\end{proof}

\begin{example}\label{E:free}
Let $T_1=\{x_i\}$ and $T_2=\{x_2\}$ be trivial quandles. 
Each has one connected component represented by $x_i$. 
The enveloping group $G_{T_i}$ of $T_i$ is infinite cyclic $G_{T_i}\cong \B Z$
and hence the free product
$$
T_1*T_2 = (\B F_2=\langle x_1,x_2\rangle, \{x_1,x_2\},\{Z(x_1),Z(x_2)\}).
$$
Notice that this is the quandle considered in Example \ref{E:F2}. It follows
from Proposition~\ref{P:free-product} that both connected components
have infinite diameter.
\hfill $\diamondsuit$
\end{example}

\begin{corollary}\label{C:free-qr}
Every connected component of a free quandle $\OP{FQ}(X)$ has infinite diameter.
Consequently, the same is true for free racks due to Lemma \ref{L:r-ext-q}.
\qed
\end{corollary}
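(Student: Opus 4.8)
The plan is to deduce Corollary~\ref{C:free-qr} directly from the two preceding results, since the free quandle is literally a free product of trivial quandles and the preceding propositions were designed to cover exactly this situation. First I would recall from the remark before Proposition~\ref{P:free-product} that the free quandle $\OP{FQ}(X)$ on a set $X$ is the free product of $\lvert X\rvert$ copies of the trivial quandle, one copy for each generator. To invoke Proposition~\ref{P:free-product} I must present $\OP{FQ}(X)$ as a free product $X_1*X_2$ of two quandles, each with finitely many connected components and each satisfying the injectivity hypothesis $X_i\to G_{X_i}$. For $\lvert X\rvert\geq 2$ I would split the generating set into two nonempty pieces, group the copies accordingly, and set $X_1$ and $X_2$ to be the corresponding free subquandles; a trivial quandle on $k$ generators has exactly $k$ connected components (each point is its own component, since $\psi_x=\OP{Id}$), so both factors have finitely many components.

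The one hypothesis that needs checking is the injectivity of $X_i\to G_{X_i}$. For a trivial quandle $T$ the relation $x\qu y = xyx^{-1}$ together with $x\qu y = y$ forces $xyx^{-1}=y$, so the enveloping group is free abelian of rank equal to the number of generators, and the natural map sends distinct generators to distinct basis elements; hence it is injective. The same remains true for the free product of such factors, so the hypothesis of Proposition~\ref{P:free-product} holds. Applying that proposition then gives that every connected component of $X_1*X_2=\OP{FQ}(X)$ has infinite diameter, which is the first assertion. The case $\lvert X\rvert=1$ is degenerate (the free quandle on one generator is a single point, hence trivially bounded), so I would either assume $\lvert X\rvert\geq 2$ throughout or note that the interesting content is for nontrivial free quandles, matching the phrasing ``free nontrivial racks and quandles'' in the introduction.

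For the second assertion about free racks, I would use the canonical rack--quandle extension and Lemma~\ref{L:r-ext-q}. The free rack on $X$ maps onto the free quandle $\OP{FQ}(X)$ under the canonical extension $X\to\underline{X}$, and this map is $1$-Lipschitz and sends connected components onto connected components. Since Lemma~\ref{L:r-ext-q} gives $d_{\underline{X}}([x],[y])\leq d_X(x,y)$, an infinite diameter downstairs forces infinite diameter upstairs; this is precisely the implication recorded after Definition~\ref{D:bounded} (``if the underlying quandle is unbounded then so is the rack''). Combining this with the quandle result yields that every connected component of a free rack has infinite diameter.

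The main obstacle I anticipate is purely bookkeeping rather than mathematical: confirming that the free quandle genuinely arises as a free product of \emph{two} factors each meeting the finiteness and injectivity hypotheses, rather than needing a version of Proposition~\ref{P:free-product} for arbitrarily many factors. This is resolved by the associativity of the free product together with the observation that any partition of the generators into two nonempty blocks gives such a decomposition; one should take care that when $\lvert X\rvert$ is large the two factors are themselves free quandles on smaller sets, which still satisfy the required injectivity into their enveloping groups by the free-abelian computation above. No genuinely hard step remains once these identifications are in place.
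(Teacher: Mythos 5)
Your argument is correct and follows the route the paper intends: the corollary is stated without a written proof, resting on the identification of $\OP{FQ}(X)$ as a free product of trivial quandles (cf.\ Example \ref{E:free}) and on Proposition \ref{P:free-product}, exactly as you do, with the rack case handled by Lemma \ref{L:r-ext-q}. Your explicit checks---that the enveloping group of a trivial quandle is free abelian, hence $X_i\to G_{X_i}$ is injective, and that $\lvert X\rvert=1$ must be excluded---are details the paper leaves implicit; the only residual caveat, shared with the paper, is that the finiteness hypothesis in Proposition \ref{P:free-product} means the two-factor decomposition literally covers only finite generating sets.
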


\begin{example}\label{E:knot}
Let $K\subseteq \B S^3$ be a non-trivial knot and let $G_K = \pi_1(\B
S^3\setminus K)$ be the fundamental group of its complement. Let $Q_K$ be the
associated quandle.  Then $Q_K = (G_K,\{s\},P)$, where $s\in G_K$ is the
element represented by the meridian of $K$ and $P$ is the image of
$\pi_1(\partial U)\to G_K$, where $U\subseteq \B S^3$ is a tubular
neighbourhood of $K$ \cite[Corollary 16.2]{zbMATH03744146}.
Notice that $Q_K$ is connected.

Fujiwara \cite[Theorem 1.6]{zbMATH01060878} proved that the second bounded
cohomology of $G_K$ is infinite dimensional.  Since $P$ is abelian, it follows
that there is a non-trivial homogeneous quasi-morphism $q\colon G_K\to \B R$
vanishing on $P$. Let $g\in G_K$ be an element such that $q(g)>0$. Then for any
$h\in P$ we have that
$$
d(g^n,h) = \|g^nh^{-1}\| \geq C(q(gh^{-1})) \geq C(nq(g)-D) 
$$
is arbitrarily large for a large $n\in \B N$. We used here the fact that
quasimorphisms are Lipschitz with respect to conjugation-invariant norms on
normally finitely generated groups and $C>0$ above is the Lipschitz constant;
$D\geq 0$ is the defect of $q$.  This shows that the inclusion $P\subseteq G_K$
is not coarsely surjective and hence the quandle metric has infinite diameter.
\hfill $\diamondsuit$
\end{example}

\section{Bounded cohomology of racks and quandles}\label{S:bcrq}
\begin{remark}
In this paper we consider only the cohomology with real coefficients,
considered as a trivial module. For a general definition of quandle
or rack cohomology see, for example,~\cite{zbMATH01716035,zbMATH01878448}.
\end{remark}

\subsubsection*{Definition of bounded cohomology}
Let $(X,\qu)$ be a quandle. 
Recall that the {\em rack cochain complex} $C^*(X;\B R)$ with real coefficients 
is a complex in which $C^k(X;\B R)$ consist of functions
$f\colon X^k\to \B R$ with
the differential is given by
\begin{align*}
\delta f(x_1,x_2,\ldots,x_{k+1}) &= 
\sum_{i=1}^{k}  (-1)^{i-1}f(x_1,\ldots,x_{i-1},x_{i+1},\ldots,x_{k+1}) \\
&- \sum_{i=1}^{k} (-1)^{i-1}f(x_1,\ldots,x_{i-1},x_i\qu x_{i+1},\ldots,x_i\qu x_{k+1}).
\end{align*}
The cohomology $H^*(X;\B R)$ of the above complex
is called the real {\em rack cohomology} of $(X,\qu)$. 
The subcomplex $C^*_b(X;\B R)\subseteq C^*(X;\B R)$ consisting
of {\it bounded} functions defines the (real) {\em rack bounded cohomology}
of $(X,\qu)$ denoted by $H^*_b(X;\B R)$.
The inclusion $C^*_b(X;\B R)\subseteq C^*(X;\B R)$ induces a homomorphism
$$
H^*_b(X;\B R)\to H^*(X;\B R),
$$
called the {\em comparison} map.
The {\em quandle (bounded) cohomology} is defined as above for the subcomplex
consisting of (bounded) functions $f\colon X^k\to \B R$ satisfying the
additional condition that
\begin{equation}
f(x_1,\ldots,x_k) = 0 \text{ if } x_i=x_{i+1} \text{ for some } i=1,\ldots,k-1.
\label{Eq:q-complex}
\end{equation}
The notation for both the rack and quandle cohomology is for simplicity the
same with the convention that if $(X,\qu)$ is a rack (quandle) then
$H^*_b(X;\B R)$ is the rack (quandle) cohomology.

\begin{remark}
The reason that the quandle cochain complex is smaller is to discard {\it
redundant} cohomology arising from inclusions of singletons which, in the case
of quandles, are retracts.  More precisely, if $x\in X$ then $p\colon X\to
\{x\}$ is a morphism of quandles that has a section $i\colon \{x\}\to X$. The
rack cohomology of the trivial rack $\{x\}$ is isomorphic to $\B R$ in each
degree. Taking into account the retracts for every point of the quandle makes
its rack cohomology unnecessarily enormous; see also \cite[Remark
2.6]{zbMATH07050797}.
\end{remark}

\paragraph{Convention:} {\it In order to make the paper less cumbersome, in
what folllows, we present examples, arguments and proofs mostly for racks. All
arguments carry over almost verbatim for quandles and their bounded cohomology.
If necessary, a separate statement will be given for quandles.  }

\begin{example}\label{E:H1b}
A (bounded) one-cocycle $f\colon X\to \B R$ is constant on each connected
component of $(X,\qu)$.  Indeed, the defining property yields
$$
0 = \delta f(x,y) = f(y) - f(\psi_x(y)).
$$
In particular,
$$
0 = \delta f(x,\psi_x^{-1}(y)) = f(\psi_x^{-1}(y)) - f(y).
$$
This implies that
$f(x) 
= f\left( \left( \psi^{\pm 1}_{x_1}\dots \psi^{\pm 1}_{x_n} \right)(x) \right)$
for every $x,x_1,\ldots,x_n\in X$. Consequently, 
$H^1_b(X;\B R) = \OP{Fun}_b(\pi_0(X,\qu))$ and 
$H^1(X;\B R) = \OP{Fun}(\pi_0(X,\qu))$. 
Notice, that a one-cocycle is a rack homomorphism, where $\B R$ is
considered as a trivial rack.
\hfill $\diamondsuit$
\end{example}

\begin{example}\label{E:trivial}
Let $(X,\qu)$ be the trivial rack. That is $x\qu y=y$ for all
$x,y\in X$ or, equivalently, $\psi\colon X\to \OP{Sym}(X)$ is constant,
equal to the identity. It is immediate to verify that 
the differential in the rack cochain complex is identically zero which
implies that $H^k_b(X;\B R)$ is isomorphic to the space
$\OP{Fun}_b(X^k;\B R)$ of bounded functions on $X^k$. If $(X,\qu)$ is
a quandle then its quandle bounded cohomology of degree $k$
is isomorphic to the space $\underline{\OP{Fun}}_b(X^k,\B R)$ of
bounded functions which are zero on the elements $(x_1,\ldots,x_k)$
such that $x_i=x_{i+1}$ for some $i=1,\ldots,k-1$.
\hfill $\diamondsuit$
\end{example}

\begin{example}\label{E:pi0}
If $(X,\qu)$ is a rack then the set $\pi_0(X,\qu)$ of its connected component
is a trivial rack with respect to the operation induced from $(X,\qu)$. That
is, 
$$
\pi(x)\qu \pi(y) = \pi(x\qu y) = \pi(y).
$$  
The latter also means that the
projection $\pi\colon X\to \pi_0(X,\qu)$ is a morphism of racks.
Hence, it induces a homomorphism
$$
\pi^*\colon H^k_b(\pi_0(X);\B R)=\OP{Fun}_b(\pi_0(X)^k,\B R)\to H^k_b(X;\B R).
$$
There is an analogous homomorphism on quandles defined on
$\underline{\OP{Fun}}_b(X^k,\B R)$.
\hfill $\diamondsuit$
\end{example}

The space of cochains $C^k_b(X;\B R)$ is an $\OP{Aut}(X,\qu)$-module.
Indeed, let $f\colon X^k\to \B R$ be a cochain.
If $\alpha\in \OP{Aut}(X,\qu)$ is an automorphism of $(X,\qu)$ then let
$f\cdot\alpha\colon X^k\to \B R$ be defined by
$$
(f\cdot \alpha)(x_1,\ldots,x_k) = f(\alpha(x_1),\ldots,\alpha(x_k)).
$$
Thus $C^k(X;\B R)$ is a right $\OP{Aut}(X,\qu)$-module.
\begin{lemma}\label{L:Gpsi-mod}
The differential $\delta\colon C^k(X;\B R)\to C^{k+1}(X;\B R)$
is a map of $\OP{Aut}(X,\qu)$-modules.
\end{lemma}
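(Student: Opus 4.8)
The plan is to verify directly that $\delta(f\cdot\alpha) = (\delta f)\cdot\alpha$ for every $f\in C^k(X;\B R)$ and every $\alpha\in\OP{Aut}(X,\qu)$, which is precisely the assertion that $\delta$ is a map of right $\OP{Aut}(X,\qu)$-modules. Writing $y_i = \alpha(x_i)$ for brevity, the right-hand side $(\delta f)\cdot\alpha$ evaluated at $(x_1,\ldots,x_{k+1})$ equals, by definition of the action, the quantity $\delta f(y_1,\ldots,y_{k+1})$. So I would expand this using the formula for $\delta$ and compare it, sum by sum, with the expansion of $\delta(f\cdot\alpha)(x_1,\ldots,x_{k+1})$.

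First I would treat the first sum in the differential, which consists only of the deletion terms omitting the $i$-th entry. Since the action of $\alpha$ merely substitutes $\alpha(x_j)$ for each argument $x_j$, deleting the $i$-th entry and then applying $\alpha$ gives the same cochain value as applying $\alpha$ and then deleting; hence this sum transforms correctly using nothing more than that $\alpha$ is a map of sets.

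The essential step is the second sum, whose terms involve the rack operation through expressions of the form $x_i\qu x_j$. Here I would invoke the defining property of an automorphism, namely $\alpha(x_i\qu x_j) = \alpha(x_i)\qu\alpha(x_j)$. Applying this identity to each factor converts the value $(f\cdot\alpha)(x_1,\ldots,x_{i-1},x_i\qu x_{i+1},\ldots,x_i\qu x_{k+1})$ into $f(y_1,\ldots,y_{i-1},y_i\qu y_{i+1},\ldots,y_i\qu y_{k+1})$, which is exactly the corresponding term of $\delta f(y_1,\ldots,y_{k+1})$. This is the only place where the automorphism hypothesis is genuinely used, and it is precisely where commutativity would fail for an arbitrary bijection; I would flag it as the crux of the argument, even though the computation itself is routine.

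Collecting the two sums then yields $\delta(f\cdot\alpha)(x_1,\ldots,x_{k+1}) = \delta f(y_1,\ldots,y_{k+1}) = ((\delta f)\cdot\alpha)(x_1,\ldots,x_{k+1})$, completing the proof. Finally, I would remark that the same computation applies verbatim to the quandle cochain subcomplex: since $\alpha$ is a bijection it preserves the condition $x_i = x_{i+1}$, so the action restricts to the subcomplex defined by \eqref{Eq:q-complex} and $\delta$ remains a map of $\OP{Aut}(X,\qu)$-modules there as well.
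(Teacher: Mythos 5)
Your proof is correct and follows essentially the same route as the paper's: a direct term-by-term expansion of $\delta(f\cdot\alpha)$, where the only nontrivial step is the identity $\alpha(x_i\qu x_j)=\alpha(x_i)\qu\alpha(x_j)$ (equivalently $\alpha\circ\psi_{x}=\psi_{\alpha(x)}\circ\alpha$ from Lemma \ref{L:con-inv}), applied in the second sum. The closing remark about the quandle subcomplex is a small, correct bonus beyond what the paper writes out.
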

\begin{proof}
We apply the identity $\alpha\circ \psi_x=\psi_{\alpha}\circ \alpha$ 
from Lemma \ref{L:con-inv} in the following computation.
\begin{align*}
(\delta(f\cdot \alpha))(x_1,\ldots,x_{k+1})
&=\sum_{i=1}^k (-1)^{i-1}f(\alpha(x_1),\ldots,\alpha(x_{i-1}),\alpha(x_{i+1}),
\ldots \alpha(x_{k+1}))\\
&-\sum_{i=1}^k (-1)^{i-1} f(\alpha(x_1),\ldots,\alpha(x_{i-1}),\alpha(\psi_{x_i}(x_{i+1})),
\ldots \alpha(\psi_{x_i}(x_{k+1})))\\
&=\sum_{i=1}^k (-1)^{i-1}f(\alpha(x_1),\ldots,\alpha(x_{i-1}),\alpha(x_{i+1}),
\ldots \alpha(x_{k+1}))\\
&-\sum_{i=1}^k (-1)^{i-1} f(\alpha(x_1),\ldots,\alpha(x_{i-1}),\psi_{\alpha(x_i)}(\alpha(x_{i+1}))),
\ldots \psi_{\alpha(x_i)}(\alpha(x_{k+1}))))\\
&=((\delta f)\cdot {\alpha})(x_1,\ldots,x_{k+1}).
\end{align*}
\end{proof}
\begin{corollary}\label{C:}
The $\OP{Aut}(X,\qu)$-invariant cochains form a subcomplex 
$C^*_{b}(X;\B R)^{\rm Aut}$.\qed
\end{corollary}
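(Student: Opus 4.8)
The plan is to deduce this directly from Lemma \ref{L:Gpsi-mod}, so the whole argument is essentially formal once that lemma is in hand. First I would fix the degree $k$ and describe the candidate subspace explicitly: $C^k_b(X;\B R)^{\rm Aut}$ consists of those bounded cochains $f\in C^k_b(X;\B R)$ satisfying $f\cdot\alpha=f$ for every $\alpha\in\OP{Aut}(X,\qu)$. Since the action $f\mapsto f\cdot\alpha$ is $\B R$-linear for each fixed $\alpha$, the fixed set of a single $\alpha$ is a linear subspace, and the invariant cochains are the intersection of all of these over $\alpha\in\OP{Aut}(X,\qu)$; hence $C^k_b(X;\B R)^{\rm Aut}$ is a linear subspace in each degree, i.e. a graded subspace of $C^*_b(X;\B R)$.

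The only substantive point is that this graded subspace is preserved by the differential, and this is exactly where I would invoke Lemma \ref{L:Gpsi-mod}. That lemma gives $\delta(f\cdot\alpha)=(\delta f)\cdot\alpha$. Consequently, if $f$ is invariant then for every $\alpha$ we have $(\delta f)\cdot\alpha=\delta(f\cdot\alpha)=\delta f$, so $\delta f$ is again invariant. Thus $\delta$ restricts to maps $C^k_b(X;\B R)^{\rm Aut}\to C^{k+1}_b(X;\B R)^{\rm Aut}$, and since $\delta^2=0$ already holds in the ambient complex it holds a fortiori on the subspace. This yields a subcomplex, which is the assertion.

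I do not expect any genuine obstacle here; the content is entirely carried by Lemma \ref{L:Gpsi-mod}. The two minor things I would be careful to record are compatibility with the boundedness and, if one also wants the quandle version, compatibility with the degeneracy condition. For the former I would note that $f\cdot\alpha$ has the same supremum norm as $f$, because $\alpha$ is a bijection of $X$, so the module structure genuinely preserves the bounded subcomplex $C^*_b$. For the latter I would observe that $\alpha(x_i)=\alpha(x_{i+1})$ if and only if $x_i=x_{i+1}$, so condition \eqref{Eq:q-complex} is stable under the action and the same argument applies verbatim to the quandle cochain complex.
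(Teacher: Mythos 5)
Your argument is correct and is exactly the intended one: the paper states this corollary with no proof because it follows immediately from Lemma \ref{L:Gpsi-mod}, precisely as you derive it. Your added remarks on preservation of boundedness and of the degeneracy condition \eqref{Eq:q-complex} are accurate but not needed beyond what the lemma already supplies.
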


By restricting the action to the inner automorphism group
$G_{\psi}\leq \OP{Aut}(X,\qu)$ we also obtain an subcomplex
of $G_{\psi}$-invariant cochains and its homology will be
called the {\em invariant bounded} cohomology of $(X,\qu)$.
It will be denoted by $H^*_{b,\rm inv}(X;\B R)$.
The inclusion of the complex induces the homomorphism
$$
H^*_{b,\rm inv}(X;\B R)\to H^*_b(X;\B R).
$$

\begin{lemma}\label{L:f=fz}
If $f\colon X^k\to \B R$ is a cocycle then $f$ and $f\cdot g$ are cohomologous
for every $g\in G_{\psi}$. Consequently, $H^*_b(X;\B R)$ is a trivial
$G_{\psi}$-module.
\end{lemma}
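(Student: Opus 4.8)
The plan is to exhibit an explicit bounded primitive. Since the module structure is a right action and, by definition, $G_{\psi}$ is generated by the maps $\psi_x$ for $x\in X$, I would first reduce the statement to the single case $g=\psi_x$. By Lemma \ref{L:Gpsi-mod} the differential commutes with the action, so $f\cdot\alpha$ is again a bounded cocycle for every $\alpha\in\OP{Aut}(X,\qu)$. Granting that $f$ and $f\cdot\psi_x$ are cohomologous for every cocycle and every $x$, applying this to the cocycle $f\cdot\psi_x^{-1}$ gives $f\cdot\psi_x^{-1}\sim(f\cdot\psi_x^{-1})\cdot\psi_x=f$, hence also $f\sim f\cdot\psi_x^{-1}$; and for a word $g=\psi_{x_1}^{\pm1}\cdots\psi_{x_n}^{\pm1}$ one telescopes, each partial product $f\cdot(\psi_{x_1}^{\pm1}\cdots\psi_{x_j}^{\pm1})$ being again a cocycle, and uses transitivity of being cohomologous to obtain $f\sim f\cdot g$.

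For the single-generator step, fix $x\in X$ and define the \emph{cone} cochain $b\in C^{k-1}_b(X;\B R)$ by
$$
b(x_1,\ldots,x_{k-1}) = f(x,x_1,\ldots,x_{k-1}).
$$
The crucial point for bounded cohomology is that $b$ is a specialisation of $f$, so $\|b\|_\infty\le\|f\|_\infty<\infty$ and $b$ genuinely lies in the bounded subcomplex. I would then verify directly that $\delta b = f - f\cdot\psi_x$. The verification is a single reading of the cocycle identity $\delta f(x,x_1,\ldots,x_k)=0$: its $i=1$ term is exactly $f(x_1,\ldots,x_k)-f(x\qu x_1,\ldots,x\qu x_k)=(f-f\cdot\psi_x)(x_1,\ldots,x_k)$, while its terms with $i\ge 2$, after the reindexing $i\mapsto i-1$, reproduce $-\delta b(x_1,\ldots,x_k)$. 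Summing and using $\delta f=0$ yields $0=(f-f\cdot\psi_x)-\delta b$, that is $\delta b=f-f\cdot\psi_x$, as wanted. Passing to classes then shows that the $G_{\psi}$-action on $H^k_b(X;\B R)$ sends $[f]$ to $[f\cdot g]=[f]$, i.e.\ the action is trivial.

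The computation above is the only real content, and the main (minor) obstacle is bookkeeping: one must keep the index shift and the sign flip $(-1)^{i-1}\mapsto(-1)^{i}=-(-1)^{i-1}$ straight so that the $i\ge2$ part of $\delta f(x,x_1,\ldots,x_k)$ matches $-\delta b$ termwise, leaving precisely the $i=1$ term as the difference $f-f\cdot\psi_x$. No delicate estimate is needed, since boundedness of $b$ is automatic. Finally, the quandle case is identical: if $f$ vanishes on tuples with two equal adjacent entries, then so does $b$, because a coincidence $x_i=x_{i+1}$ among the arguments of $b$ is a coincidence of adjacent arguments of $f(x,x_1,\ldots,x_{k-1})$.
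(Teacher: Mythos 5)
Your proof is correct and takes essentially the same route as the paper: the slice cochain $b=f_x$ defined by plugging $x$ into the first slot satisfies $\delta(f_x)=f-f\cdot\psi_x$ by reading off the cocycle identity $\delta f(x,x_1,\ldots,x_k)=0$, boundedness of $b$ is automatic, and one telescopes over a word in the generators $\psi_x$. The only (welcome) difference is that you explicitly handle inverse generators via $f\cdot\psi_x^{-1}\sim(f\cdot\psi_x^{-1})\cdot\psi_x=f$, a point the paper's telescoping over $g=\psi_{x_1}\cdots\psi_{x_n}$ leaves implicit.
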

\begin{proof}
Given $z\in X$, let $f_z\in C^{k-1}(X;\B R)$ be defined by 
$$
f_z(x_1,\ldots,x_{k-1}) = f(z,x_1,\ldots,x_{k-1}).
$$
The following equalities are straightforward to verify. The second one follows
because $f$ is a cocycle.
\begin{align*}
\delta (f_{z})(x_1,\ldots,x_k) 
&= (f - f\cdot {\psi_z})(x_1,\ldots,x_k) - \delta f(z,x_1,\ldots,x_k)\\
&= (f-f\cdot{\psi_z})(x_1,\ldots,x_k).
\end{align*}
Since $\psi_z$ generate $G_{\psi}$, we have
\begin{align*}
f-f\cdot g
&= f - f\cdot \psi_{x_1}\dots \psi_{x_n}\\
&= f - f\cdot \psi_{x_1} + f\cdot \psi_{x_1}
- f\cdot \psi_{x_1}\psi_{x_2} + f\cdot \psi_{x_1}\psi_{x_2} -\dots 
-f\cdot \psi_{x_1}\dots \psi_{x_n}\\
&=\delta(f_{x_1}) + \delta\left( (f\cdot\psi_{x_1})_{x_2} \right)
+\dots+ \delta\left( (f\cdot\psi_{x_1}\dots\psi_{x_{n-1}})_{x_n} \right)
\end{align*}
which proves the statement holds for every $g\in G_{\psi}$.
\end{proof}

\section{Rack quasimorphisms and unboundedness}

\begin{definition}\label{D:qqmor}
A {\em rack quasimorphism} is a function $f\colon X\to \B R$ for which
there exists $D\geq 0$ such that
$$
|f(y) - f(x\qu y)| \leq D,
$$
for all $x,y\in X$. If $(X,\qu)$ is a quandle then $f$ will be called
a quandle quasimorphism (definition is the same). Notice that the definition
implies that
$$
\left |f(y) - f(\psi_x^{\pm 1}(y))\right | \leq D,
$$
for all $x,y\in X$.
\end{definition}

\begin{lemma}\label{L:H2b}
If $f\colon X\to \B R$ is a rack quasimorphism then $\delta f$ is a
two-cocycle. The class $[\delta f]\in H^2_b(X;\B R)$ is in the kernel of
the comparison map.
If $f$ is unbounded on a connected component of $(X,\qu)$  then
$\delta f$ is nontrival. 
\end{lemma}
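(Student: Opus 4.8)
The plan is to verify the three assertions in order, the guiding observation being that the rack quasimorphism condition is nothing but a boundedness statement about $\delta f$. First I would compute the differential of a one-cochain: for $f\colon X\to \B R$ the definition of $\delta$ collapses (as already recorded in Example \ref{E:H1b}) to $\delta f(x_1,x_2)=f(x_2)-f(x_1\qu x_2)$, which is exactly the expression controlled in Definition \ref{D:qqmor}. Hence the quasimorphism inequality $|f(y)-f(x\qu y)|\le D$ says precisely that $|\delta f|\le D$, so $\delta f\in C^2_b(X;\B R)$. Since $\delta^2=0$ in the rack cochain complex, $\delta f$ is automatically a cocycle, and being bounded it is a \emph{bounded} two-cocycle, defining a class $[\delta f]\in H^2_b(X;\B R)$. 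This settles the first assertion.

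For the second assertion I would use that the comparison map $H^2_b(X;\B R)\to H^2(X;\B R)$ is induced by the inclusion $C^*_b(X;\B R)\hookrightarrow C^*(X;\B R)$, so it sends $[\delta f]$ to the class of $\delta f$ in the full (unbounded) complex. But there $\delta f$ is visibly the coboundary of the one-cochain $f$ (which is allowed to be unbounded), so its class in $H^2(X;\B R)$ vanishes. Therefore $[\delta f]$ lies in the kernel of the comparison map.

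The only real content is the third assertion, which I would prove by contradiction. Suppose $[\delta f]=0$ in $H^2_b(X;\B R)$. Then there is a \emph{bounded} one-cochain $g\in C^1_b(X;\B R)$ with $\delta f=\delta g$, so $f-g$ is a one-cocycle. By Example \ref{E:H1b} every one-cocycle is constant on each connected component of $(X,\qu)$. Choosing a component on which $f$ is unbounded, I conclude that $f-g$ is constant there, whence $f=(f-g)+g$ is the sum of a constant and the bounded function $g$, and so is bounded on that component — contradicting the hypothesis. Thus $\delta f$ is nontrivial in $H^2_b(X;\B R)$.

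I do not anticipate a serious obstacle here; the argument is essentially bookkeeping. The one point that genuinely matters is that $g$ is bounded: it is the boundedness of $g$ together with the constancy of $f-g$ on a single connected component (via Example \ref{E:H1b}) that forces $f$ to be bounded there, and this is what yields the contradiction with unboundedness.
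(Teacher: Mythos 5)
Your argument is correct and follows essentially the same route as the paper: the first two assertions are the direct computation $\delta f(x,y)=f(y)-f(x\qu y)$ plus the observation that $\delta f$ is a coboundary in the unbounded complex, and the third is the same contradiction via Example \ref{E:H1b} (the paper writes $\beta$ where you write $g$). The only detail the paper adds that you omit is the remark that in the quandle case $\delta f(x,x)=f(x)-f(\psi_x(x))=0$, so $\delta f$ is also a quandle cocycle.
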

\begin{proof}
The first two statements are obvious. Notice that if $(X,\qu)$ is a quandle
then $\delta f$ is a quandle cocycle, since $\delta f(x,x) = f(x) - f(\psi_x(x))=0$.
Suppose that $f$ is unbounded on a connected component of $(X,\qu)$.  If
$[\delta f]=0$ in $H^2_b(X;\B R)$ then $\delta f=\delta\beta$ for some bounded
cochain $\beta\colon X\to \B R$. It follows that $\delta(f-\beta)=0$, that is
$f-\beta$ is an ordinary one-cocycle and hence it has to be constant on each
connected component of $(X,\qu)$, due to Example \ref{E:H1b}. The latter is
impossible, because $f$ is unbounded and $\beta$ is bounded.
\end{proof}

\begin{example}\label{E:qqm-F2}
Let $\B F_2=\langle x,y\rangle$ be a free group on two generators.
Let $\varphi\colon \B F_2\to \B R$ be a non-trivial homogeneous group quasimorphism.
Let $(X,\qu)= (\B F_2,\{x,y\},\{Z(x),Z(y)\})$ be a free quandle on two generators
and let $\widehat{\varphi}\colon X\to \B R$ be defined by
$$
\widehat{\varphi}(gxg^{-1}) = \varphi(g'),
$$
where $g=g'x^k$ and $g'$ is a reduced word finishing with $y$. Define 
$\widehat{\varphi}(gyg^{-1})$ analogously. 

Since $\varphi$ is a homogeneous quasimorphism on $\B F_2$, it is constant on
conjugacy classes and hence bounded, say by $B>0$, on $C(x^{\pm 1})\cup
C(y^{\pm 1})$.  Let $s\in \B F_2$ be a conjugate of a generator.
We thus have that
\begin{align*}
|\widehat{\varphi}(s\qu gxg^{-1}) - \widehat{\varphi}(gxg^{-1})|
&= |\widehat{\varphi}(sgxg^{-1}s^{-1}) - \varphi(g')|\\
&=|\varphi(sg') - \varphi(g')|\\
&\leq |\varphi(s)| + D\\
&\leq B+D,
\end{align*}
which shows that $\widehat{\varphi}$ is an unbounded quandle quasimorphism
(a similar computation is done for $gyg^{-1}$).
\hfill $\diamondsuit$
\end{example}

\begin{proposition}\label{P:qqm}
A rack quasimorphism $f\colon X\to \B R$ is Lipschitz with respect
to the rack metric. More precisely, there exists a constant $C>0$
such that
$$
|f(x) - f(y)|\leq Cd(x,y)
$$
for any $x,y\in X_s$, where $X_s\subseteq X$ is a connected component.
\end{proposition}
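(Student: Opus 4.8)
The plan is to reduce the Lipschitz estimate to a single-step estimate using the recursive structure of the rack metric. Recall from Definition \ref{D:q-metric} that if $d(x,y)=n$ then there exist $x_1,\ldots,x_n\in X$ with
$$
y=\left(\psi_{x_1}^{\pm 1}\circ\dots\circ\psi_{x_n}^{\pm 1}\right)(x).
$$
The idea is to interpolate along this word: set $z_0=x$ and $z_j=\left(\psi_{x_{n-j+1}}^{\pm 1}\circ\dots\circ\psi_{x_n}^{\pm 1}\right)(x)$ for $j=1,\ldots,n$, so that $z_n=y$ and each consecutive pair $z_{j-1},z_j$ differs by a single application of some $\psi_w^{\pm 1}$. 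Then a telescoping sum together with the triangle inequality gives
$$
|f(x)-f(y)|\le\sum_{j=1}^{n}|f(z_{j-1})-f(z_j)|.
$$

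The key step is bounding each summand by the quasimorphism defect $D$. By construction $z_j=\psi_w^{\pm 1}(z_{j-1})$ for some $w\in X$, and the remark in Definition \ref{D:qqmor} records precisely that
$$
\left|f(v)-f(\psi_w^{\pm 1}(v))\right|\le D
$$
for all $v,w\in X$ (this is the symmetric consequence of the one-sided inequality $|f(y)-f(x\qu y)|\le D$, obtained by substituting $\psi_w^{-1}(v)$ for $v$ in the forward inequality to handle the inverse case). Hence each term $|f(z_{j-1})-f(z_j)|\le D$, and summing $n$ such terms yields $|f(x)-f(y)|\le Dn=D\,d(x,y)$. Taking $C=D$ completes the proof.

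The main obstacle, such as it is, lies in correctly justifying the two-sided estimate for $\psi_w^{-1}$ from the one-sided definition; but this is exactly the content of the remark already bundled into Definition \ref{D:qqmor}, so it requires no real work. A minor bookkeeping point is that the interpolating points $z_j$ all lie in the same connected component $X_s$ as $x$ and $y$ (immediate from Definition \ref{D:component}), so the estimate stays within the component where the metric is defined. I expect the entire argument to be a short telescoping computation with $C=D$, reflecting the author's remark that the groundwork in the preceding sections makes these proofs routine.
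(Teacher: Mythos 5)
Your proposal is correct and follows essentially the same route as the paper: a telescoping sum along the word realizing $d(x,y)=n$, bounding each single step by the defect $D$ via the two-sided remark in Definition \ref{D:qqmor}, giving $C=D$. The only cosmetic difference is that you peel off the innermost $\psi$ first while the paper peels off the outermost.
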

\begin{proof}
Let $x,y\in X_s$ be such that $d(x,y)=n$. It means that 
there exist $x_1,\ldots,x_n\in X$ such that
$y= \left( \psi^{\pm 1}_{x_1}\dots \psi^{\pm 1}_{x_n} \right)(x)$.
By applying inductively the defining property $n$ times we get the following
estimate which proves the statement.
\begin{align*}
|f(y)-&f(x)|
=\left |f\left( \left( \psi^{\pm 1}_{x_1}\dots\psi^{\pm 1}_{x_n} 
\right)(x) \right) -f(x)\right |\\
&=\left |f\left( \left( \psi^{\pm 1}_{x_1}\dots\psi^{\pm 1}_{x_n} \right)(x) \right) 
-f\left( \left( \psi^{\pm 1}_{x_2}\dots\psi^{\pm 1}_{x_n} \right)(x) \right)
+f\left( \left( \psi^{\pm 1}_{x_2}\dots\psi^{\pm 1}_{x_n} \right)(x) \right)
-f(x)\right |\\
&\leq D + \left|f\left( \left( \psi^{\pm 1}_{x_2}\dots\psi^{\pm 1}_{x_n} 
\right)(x) \right) -f(x)\right |\\
&\leq \dots
\leq Dn 
= Dd(x,y).
\end{align*}
\end{proof}

\begin{corollary}\label{C:unb}
If a rack $(X,\qu)$ admits a quasimorphism that is unbounded
on a connected component $X_s$ then this component is unbounded.
\qed
\end{corollary}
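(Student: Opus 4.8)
The plan is to read the corollary off Proposition \ref{P:qqm} by a one-line contrapositive argument, since the analytic content---that a rack quasimorphism is Lipschitz for the rack metric---has already been established there.

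First I would apply Proposition \ref{P:qqm} to the given quasimorphism $f$ and the component $X_s$, obtaining a constant $C>0$ with $|f(x)-f(y)|\leq C\,d(x,y)$ for all $x,y\in X_s$. Next I would argue contrapositively: assume $X_s$ is bounded, so that its diameter $N=\OP{diam}(X_s)$ is finite. Fixing any basepoint $x_0\in X_s$, for every $y\in X_s$ the Lipschitz estimate gives $|f(y)-f(x_0)|\leq C\,d(y,x_0)\leq CN$, and hence $|f(y)|\leq |f(x_0)|+CN$. Thus $f$ is bounded on $X_s$, contradicting the hypothesis that $f$ is unbounded there. Consequently $X_s$ cannot be bounded, which by Definition \ref{D:bounded} is precisely the claimed unboundedness.

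There is really no obstacle here: the work is entirely contained in Proposition \ref{P:qqm}, and the only point requiring care is the logical direction---the corollary produces unboundedness of the component from unboundedness of $f$, which is simply the contrapositive of the elementary fact that a Lipschitz function on a space of finite diameter is bounded. This is why the statement is given as an immediate consequence, with no separate argument, in the excerpt.
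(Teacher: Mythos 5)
Your argument is exactly the intended one: the corollary is stated with no proof in the paper precisely because it is the immediate contrapositive of Proposition \ref{P:qqm}, namely that a Lipschitz function on a component of finite diameter is bounded. The proposal is correct and matches the paper's (implicit) reasoning.
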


\begin{proposition}\label{P:nontrivial-kernel}
If the kernel of the comparison map $H^2_b(X;\B R)\to H^2(X;\B R)$
is nontrivial then the supremum of the diameters of the connected
components of $(X,\qu)$ is infinite. In particular, if $(X,\qu)$
has finitely many connected components then the nontriviality
of the kernel of the above comparison map implies that $(X,\qu)$
is unbounded.
\end{proposition}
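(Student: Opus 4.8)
The plan is to prove the contrapositive of the first assertion: assuming the supremum $R$ of the diameters of the connected components of $(X,\qu)$ is finite, I will show that the comparison map $H^2_b(X;\B R)\to H^2(X;\B R)$ is injective. First I would unravel a kernel element. Such a class is represented by a bounded two-cocycle $\omega$ that becomes a coboundary in the ordinary complex, so $\omega=\delta f$ for some (a priori unbounded) one-cochain $f\colon X\to\B R$. Since $\delta f(x_1,x_2)=f(x_2)-f(\psi_{x_1}(x_2))$, the boundedness of $\omega=\delta f$ says exactly that $f$ is a rack quasimorphism in the sense of Definition \ref{D:qqmor}; call its defect $D$.

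The structural observation driving the argument is that each $\psi_x$ preserves connected components, since the components are the orbits of $G_\psi$. Hence in the expression for $\delta f(x_1,x_2)$ both $x_2$ and $\psi_{x_1}(x_2)$ lie in the same component, and therefore $\delta f$ is unaffected by modifying $f$ through any function constant on each component: if $\pi\colon X\to\pi_0(X,\qu)$ is the projection and $c\colon\pi_0(X,\qu)\to\B R$ is arbitrary, then $\delta(f+c\circ\pi)=\delta f$ because the two correction terms cancel. This is precisely what permits recentering $f$ separately on each component.

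Now I would apply Proposition \ref{P:qqm}: the quasimorphism $f$ is $D$-Lipschitz on each component $X_s$, so its oscillation on $X_s$ is at most $D\cdot\OP{diam}(X_s)\le D\cdot R$. Choosing a basepoint $x_s\in X_s$ in every component and letting $m\colon\pi_0(X,\qu)\to\B R$ send the class of $X_s$ to $f(x_s)$, the cochain $\beta=f-m\circ\pi$ satisfies $|\beta|\le D\cdot R$ everywhere, while $\delta\beta=\delta f=\omega$ by the previous paragraph. Thus $\omega$ is a bounded coboundary and vanishes in $H^2_b(X;\B R)$, so the kernel is trivial; contrapositively, a nontrivial kernel forces $R=\infty$. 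The final clause follows at once: with finitely many components, $\sup_s\OP{diam}(X_s)=\infty$ means some individual component has infinite diameter, so $(X,\qu)$ is unbounded.

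The step I expect to require the most care is the uniformity in the recentering. The Lipschitz estimate of Proposition \ref{P:qqm} only controls the oscillation of $f$ within each fixed component, so to assemble a single bounded cochain $\beta$ one must crucially use the uniform bound $R$ on diameters across all components at once. The component-preserving property of the maps $\psi_x$ is what guarantees that the recentering constants may be chosen independently on each component without altering $\delta f$, and it is the interplay of these two facts that closes the argument.
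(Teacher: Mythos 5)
Your proof is correct and follows essentially the same route as the paper: identify a kernel class with $\delta f$ for a quasimorphism $f$, use Proposition \ref{P:qqm} together with the uniform diameter bound to control the oscillation of $f$ on each component, and recenter by a function constant on components (the paper uses the minimum of $f$ on each component rather than a basepoint value, which is immaterial) to replace $f$ by a bounded cochain with the same coboundary.
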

\begin{proof}
Suppose that $\sup\{{\rm diam}(X_s,d_s) \ |\ X_s\in \pi_0(X,\qu)\} \leq B<\infty$.
Then, according to Proposition \ref{P:qqm}, for every quasimorphism
$f\colon X\to \B R$ we have that
$$
|f(y)-f(x)|\leq DB,
$$
for every $x,y\in X_s$ and every $X_s\in \pi_0(X,\qu)$. Let $h\colon X\to \B R$
be defined by $h(x) = \min \{f(y)\ |\ y\in X_s\ni x\}$. In other words, $h$
is a function constant on each connected component and equal to the minimum
of $f$ on that component. In particular, it is a $1$-cocycle and the
cochain $f-h$ is bounded and $\delta(f-h) = \delta f$ which means that
the kernel of the comparison map is trivial. The second statement is immediate.
\end{proof}

\begin{proposition}\label{P:unb}
If $(X,\qu)$ is unbounded then the kernel of the comparison map
$H^2_b(X;\B R)\to H^2(X;\B R)$ is nontrivial. In particular,
$H^2_b(X;\B R)\neq 0$.
\end{proposition}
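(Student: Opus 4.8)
The plan is to deduce this from Lemma \ref{L:H2b} by exhibiting a rack quasimorphism that is unbounded on a single connected component. By Definition \ref{D:bounded}, the hypothesis that $(X,\qu)$ is unbounded means that some connected component $X_s$ has infinite diameter with respect to the rack metric $d_s$. The natural candidate for such a quasimorphism is the distance-to-a-basepoint function on $X_s$.

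First I would fix a basepoint $x_0\in X_s$ and define $f\colon X\to \B R$ by $f(x)=d_s(x_0,x)$ for $x\in X_s$ and $f(x)=0$ for $x\notin X_s$. The key structural fact is that each $\psi_x$ lies in $G_\psi$ and hence preserves connected components (a component is precisely a $G_\psi$-orbit): if $y\in X_s$ then $x\qu y=\psi_x(y)\in X_s$, while if $y\notin X_s$ then $x\qu y\notin X_s$. Moreover $d_s(y,\psi_x(y))\leq 1$ directly from the definition of the rack metric, since $\psi_x(y)$ is obtained from $y$ by a single application of $\psi_x$. Consequently, for $y\in X_s$ the reverse triangle inequality gives $|f(x\qu y)-f(y)|=|d_s(x_0,\psi_x(y))-d_s(x_0,y)|\leq d_s(\psi_x(y),y)\leq 1$, whereas for $y\notin X_s$ both terms vanish. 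Thus $f$ is a rack quasimorphism with defect $D=1$.

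Next I would verify that $f$ is unbounded on $X_s$. Since $\OP{diam}(X_s,d_s)=\infty$ and the triangle inequality gives $d_s(x,y)\leq d_s(x,x_0)+d_s(x_0,y)$ for all $x,y\in X_s$, a uniform bound on $d_s(x_0,\cdot)$ would force a finite diameter. Hence $d_s(x_0,\cdot)$, and therefore $f$, is unbounded on the component $X_s$. At this point Lemma \ref{L:H2b} applies verbatim: $\delta f$ is a two-cocycle, its class $[\delta f]\in H^2_b(X;\B R)$ lies in the kernel of the comparison map, and because $f$ is unbounded on a connected component this class is nontrivial. This simultaneously shows that the kernel is nontrivial and that $H^2_b(X;\B R)\neq 0$.

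The only step that demands genuine care is confirming that $f$ is a quasimorphism on all of $X$ and not merely on $X_s$; this hinges entirely on the observation that the $\psi_x$ permute points within a single connected component and displace any point by at most one in the metric, so the quasimorphism inequality is trivially satisfied off $X_s$. Everything else is either the standard reverse triangle inequality or a direct appeal to Lemma \ref{L:H2b}, so I do not anticipate any real obstacle beyond this bookkeeping.
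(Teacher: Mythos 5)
Your proof is correct and follows essentially the same route as the paper: both take the distance-to-a-basepoint function on an infinite-diameter component, observe it is a rack quasimorphism of defect $1$ that is unbounded there, and conclude via (the argument of) Lemma \ref{L:H2b}. The only cosmetic difference is that the paper defines $f$ using a basepoint in every component rather than setting it to zero off the chosen one.
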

\begin{proof}
Let $x_s\in X_s$ be a basepoint fixed for each connected component
$X_s$ of $(X,\qu)$. Let $f\colon X\to \B R$ be defined by
$$
f(x) = d(x_s,x)
$$
if $x\in X_s$. It is unbounded according to the hypothesis. Then
$$
|\delta f(x,y)| = |f(x) - f(\psi_y(x))| = |d(x_s,x) - d(x_s,\psi_y(x)|\leq 1.
$$
The cocycle is non-zero for a non-trivial rack. Its bounded
cohomology class is non-zero by an argument analogous to the one in
Lemma \ref{L:H2b}.
\end{proof}

\section{Amenability}

This section is motivated by the result in group theory that states that the
bounded cohomology of an amenable group is trivial \cite[Section
3.0]{zbMATH03816552}. We prove that the bounded cohomology of a rack or a
quandle $(X,\qu)$ with amenable inner automorphism group is as trivial as if
$(X,\qu)$ were finite. Our proof follows almost verbatim the proof
of the analogous statement finite racks by Etingof-Gra\~na 
\cite[Theorem 4.2]{zbMATH01878448}.

\begin{definition}\label{D:amenable}
A group $G$ is called {\em amenable} if there exist a functional
${\bf m}\colon\ell^{\infty}(G)\to \B R$ such that
\begin{enumerate}
\item $\B m(1)=1$;
\item if $\varphi\geq 0$ then ${\bf m}(\varphi) \geq 0$;
\item $\B m(\varphi\circ R_h) = \B m(\varphi)$ for every $h\in G$.
\end{enumerate}
Such a functional is called a {\em right-invariant mean} on $G$.
\end{definition}

Let $(X,\qu)$ be a rack and assume that the group $G_{\psi}$ of
its inner automorphism is amenable. 
Let $P\colon C^k_b(X;\B R)\to C^k_{b,\rm inv}(X;\B R)$
be defined by
$$
P(f)(x_1,\ldots,x_k) = \B m(g\mapsto f(g(x_1),\ldots,g(x_k)).
$$

\begin{lemma}\label{L:proj}
Let $(X,\qu)$ be a rack with amenable $G_{\psi}$.
The projection $P$ is a morphism of complexes and it
induces a surjective homomorphism $P\colon H^*_b(X;\B R)\to H^*_{b,\rm inv}(X;\B R)$.
The map $\iota\colon H^*_{b,\rm inv}(X;\B R)\to H^*_b(X;\B R)$
induced by the inclusion of complexes is the right inverse of $P$.
\end{lemma}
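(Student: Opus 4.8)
The plan is to establish three facts in turn: that $P$ takes values in the invariant subcomplex and commutes with $\delta$, that it restricts to the identity on invariant cochains, and then to read off surjectivity together with the section property at the level of cohomology. First I would check that $P(f)$ really lies in $C^*_{b,\rm inv}(X;\B R)$. Boundedness is immediate: if $|f|\le M$ then for each tuple the function $g\mapsto f(g(x_1),\ldots,g(x_k))$ lies in $\ell^{\infty}(G_\psi)$ with norm at most $M$, so positivity and normalisation of $\B m$ give $|P(f)(x_1,\ldots,x_k)|\le M$. Invariance is the one place amenability is genuinely used: for $h\in G_\psi$ one has $(P(f)\cdot h)(x_1,\ldots,x_k)=\B m\bigl(g\mapsto f((gh)(x_1),\ldots,(gh)(x_k))\bigr)$, and the function under the mean is $\varphi\circ R_h$ with $\varphi(g)=f(g(x_1),\ldots,g(x_k))$, so right-invariance of $\B m$ yields $P(f)\cdot h=P(f)$.

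Next I would verify the chain-map identity $P\circ\delta=\delta\circ P$. The essential point is that every $g\in G_\psi$ is an automorphism of $(X,\qu)$, so $g(x_i\qu x_j)=g(x_i)\qu g(x_j)$; hence each face operator occurring in the rack differential is $g$-equivariant (this is Lemma \ref{L:Gpsi-mod} read one element at a time). Expanding $\delta(P f)(\vec x)$ as a finite $\B R$-linear combination $\sum_j c_j\,Pf(\vec y_j)$ of values at the face tuples $\vec y_j$, and using $Pf(\vec y_j)=\B m(g\mapsto f(g\vec y_j))$ together with linearity of $\B m$, I can pull the sum inside the mean: $\delta(Pf)(\vec x)=\B m\bigl(g\mapsto\sum_j c_j f(g\vec y_j)\bigr)$. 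Equivariance of the faces gives $\sum_j c_j f(g\vec y_j)=\delta f(g\vec x)$, so the right-hand side is exactly $P(\delta f)(\vec x)$. Thus $P$ is a morphism of complexes and induces $P\colon H^*_b(X;\B R)\to H^*_{b,\rm inv}(X;\B R)$.

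Finally I would compute $P\circ\iota$. If $f$ is already invariant then $f(g(x_1),\ldots,g(x_k))=(f\cdot g)(\vec x)=f(\vec x)$ for every $g$, so the function under the mean is the constant $f(\vec x)$ and $\B m(1)=1$ forces $P(f)=f$. Hence $P\circ\iota=\OP{Id}$ already at the cochain level, and passing to cohomology gives $P\circ\iota=\OP{Id}$ on $H^*_{b,\rm inv}(X;\B R)$; this simultaneously proves that $P$ is surjective and that $\iota$ is a right inverse. I expect the only delicate step to be the chain-map identity, where one must justify interchanging $\B m$ with $\delta$: this is legitimate precisely because $\delta$ is a finite linear combination of the $g$-equivariant face maps, so nothing beyond linearity of the mean (no continuity or limit property) is required.
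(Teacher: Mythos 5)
Your proof is correct and takes essentially the same approach as the paper: invariance of $P(f)$ from right-invariance of the mean, the chain-map identity from the $G_\psi$-equivariance of the face maps (the paper phrases this via $\psi_{gx}=g\psi_xg^{-1}$ inside an explicit expansion of $P(\delta f)$), and $P\circ\iota=\OP{Id}$ already at the cochain level. Nothing further is needed.
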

\begin{proof}
First observe that $P(f)$ is an invariant cochain.
\begin{align*}
(Pf)(hx_1,\ldots,hx_k)
&= \B m(g\mapsto f(ghx_1,\ldots,ghx_k))\\
&= \B m(g\mapsto f(gx_1,\ldots,gx_k))\\
&= (Pf)(x_1,\ldots,x_k),
\end{align*}
because the mean $\B m$ is right-invariant. 
Secondly, the projection $P$ commutes with differential.
\begin{align*}
P(\delta f)(x_1,\ldots,x_{k+1}) 
&=\B m(g\mapsto \delta f(gx_1,\ldots,gx_{k+1}))\\
&= 
\sum_{i=1}^k (-1)^{i-1}\B m(g\mapsto f(gx_1,\ldots,gx_{i-1},gx_{i+1},\ldots,gx_{k+1})))\\
&-\sum_{i=1}^k (-1)^{i-1}\B m(g\mapsto f(gx_1,\ldots,gx_{i-1},\psi_{gx_i}gx_{i+1},\ldots,
\psi_{gx_i}gx_{k+1})))\\
&= 
\sum_{i=1}^k (-1)^{i-1}\B m(g\mapsto f(gx_1,\ldots,gx_{i-1},gx_{i+1},\ldots,gx_{k+1})))\\
&-\sum_{i=1}^k (-1)^{i-1}\B m(g\mapsto f(gx_1,\ldots,gx_{i-1},g\psi_{x_i}x_{i+1},\ldots,
g\psi_{x_i}x_{k+1})))\\
&=
\sum_{i=1}^k (-1)^{i-1} Pf(x_1,\ldots,x_{i-1},x_{i+1},\ldots,x_{k+1})\\
&-\sum_{i=1}^k (-1)^{i-1}Pf(x_1,\ldots,x_{i-1},\psi_{x_i}x_{i+1},\ldots,\psi_{x_i}x_{k+1})\\
&=\delta (Pf)(x_1,\ldots,x_k).
\end{align*}
The third equality follows from the identity $\psi_{gx}=g\psi_x g^{-1}$.

If $f$ is an invariant cocycle then it is immediate that $P(f)=f$ and 
hence $P\circ \iota=\OP{Id}$ which implies the surjectivity of $P$ and
the last statement.
\end{proof}

\begin{theorem}\label{T:a-bounded}
Let $(X,\qu)$ be a rack with amenable group $G_{\psi}$ of inner automorphisms.
If the conjugation-invariant word norm on $G_{\psi}$ associated with 
$\psi(X)$ has finite diameter then: 
\begin{enumerate}
\item 
the projection 
$$
P\colon H^*_b(X;\B R)\to H^*_{b,\rm inv}(X;\B R)
$$ 
is an isomorphism;
\item
the homomorphism (from Example \ref{E:pi0})
$$
\pi^*\colon\OP{Fun}_b(\pi_0(X)^k,\B R)\to H^k_b(X;\B R)
$$
is an isomorphism.
\end{enumerate}
Consequently we have isomorphisms
$$
\OP{Fun}_b(\pi_0(X)^k,\B R)\cong H^k_{b,\rm inv}(X;\B R)\cong H^k_b(X;\B R).
$$
\end{theorem}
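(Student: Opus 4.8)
I would prove the two numbered items separately and then read off the final display as the composition, since $P$ and the inclusion‑induced map $\iota\colon H^*_{b,\rm inv}(X;\B R)\to H^*_b(X;\B R)$ will turn out to be mutually inverse.

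\textbf{Part (1): $P$ is an isomorphism.} Lemma \ref{L:proj} already gives that $P$ is surjective and that $\iota$ is a right inverse, $P\circ\iota=\OP{Id}$. To upgrade this to an isomorphism it suffices to show $\iota\circ P=\OP{Id}$ on $H^*_b(X;\B R)$, i.e.\ that every bounded cocycle $f$ is cohomologous, through a \emph{bounded} coboundary, to the invariant cochain $Pf=\B m(g\mapsto f\cdot g)$. This is exactly where the finite‑diameter hypothesis enters. For each $g\in G_{\psi}$, Lemma \ref{L:f=fz} produces an explicit primitive $\beta_g$ with $f-f\cdot g=\delta\beta_g$, obtained by telescoping along a word $g=\psi_{x_1}\cdots\psi_{x_n}$; each of the $n$ summands of $\beta_g$ is a restriction of a translate of $f$, so $\|\beta_g\|_\infty\le n\,\|f\|_\infty$. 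Choosing a geodesic word, $n=\|g\|_\psi$, and using that the word norm $\|\cdot\|_\psi$ has diameter at most some $N<\infty$, we obtain the uniform bound $\|\beta_g\|_\infty\le N\|f\|_\infty$. Hence $g\mapsto\beta_g$ is a uniformly bounded family, and $\beta:=\B m(g\mapsto\beta_g)$ (the mean applied pointwise) is a bounded cochain. Since $\delta$ is a fixed finite linear combination of evaluation maps, it commutes with the mean in the $g$‑variable, so $\delta\beta=\B m(g\mapsto\delta\beta_g)=\B m(g\mapsto f-f\cdot g)=f-Pf$. Thus $[f]=[Pf]$, and $P$ is an isomorphism with inverse $\iota$. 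Note that amenability and boundedness are both used here: the former to form $\B m$, the latter to keep the averaged primitive bounded.

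\textbf{Part (2): $\pi^*$ is an isomorphism (reduction).} Because $\pi_0(X,\qu)$ is a trivial rack its differential vanishes (Example \ref{E:trivial}), so every $\bar f\in\OP{Fun}_b(\pi_0(X)^k,\B R)$ pulls back to a cochain $\pi^*\bar f$ depending only on the components of its arguments; such a cochain is automatically $G_{\psi}$‑invariant and is a cocycle, since the two sums in $\delta$ agree termwise because $[\psi_{x_i}x_j]=[x_j]$. Hence $\pi^*$ factors as $\iota\circ\overline{\pi^*}$ with $\overline{\pi^*}\colon\OP{Fun}_b(\pi_0(X)^k,\B R)\to H^k_{b,\rm inv}(X;\B R)$, and by Part (1) it is enough to prove $\overline{\pi^*}$ is an isomorphism. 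I would first record the simplification that on invariant cochains the $i=1$ terms of $\delta$ cancel: by invariance $f(\psi_{x_1}x_2,\ldots,\psi_{x_1}x_{k+1})=f(x_2,\ldots,x_{k+1})$, which equals the first‑sum term, leaving a reduced differential touching only the coordinates $x_2,\ldots,x_{k+1}$. A low‑degree check confirms the target statement: in degree $1$ the reduced differential is zero and $C^1_{b,\rm inv}=\OP{Fun}_b(\pi_0(X))$ (Example \ref{E:H1b}); in degree $2$ the cocycle condition forces $f(x_1,\psi_{x_2}x_3)=f(x_1,x_3)$, which with invariance makes $f$ factor through $\pi_0\times\pi_0$, while the coboundaries vanish, so $\overline{\pi^*}$ is an isomorphism there.

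\textbf{Part (2): the heart, and the main obstacle.} For general $k$ the differential on the invariant complex is genuinely non‑zero, so I would compute $H^*_{b,\rm inv}$ by exhibiting a chain homotopy retracting $C^*_{b,\rm inv}(X;\B R)$ onto the image of $\overline{\pi^*}$, following Etingof--Gra\~na almost verbatim. Fixing a basepoint $b_c$ in each component $c\in\pi_0(X,\qu)$, set $\rho f(x_1,\ldots,x_k)=f(b_{[x_1]},\ldots,b_{[x_k]})$ and $q(f)(c_1,\ldots,c_k)=f(b_{c_1},\ldots,b_{c_k})$; then $\rho=\overline{\pi^*}\circ q$ and $q\circ\overline{\pi^*}=\OP{Id}$. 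The remaining task is to construct a bounded, invariant homotopy $h$ with $\OP{Id}-\rho=\delta h+h\delta$, built by replacing the coordinates $x_k,x_{k-1},\ldots$ by their basepoints one at a time, the reduced rack differential supplying the homotopy terms. Boundedness of $h$ is automatic, since each term of $h(f)$ is a value of the bounded cochain $f$ (where a non‑canonical choice of group element is required to move a coordinate to its basepoint, one symmetrizes using the mean $\B m$, which keeps the terms bounded). The genuine difficulty — and the step I expect to be the main obstacle — is matching the combinatorics of the reduced differential term‑by‑term against $\delta h+h\delta$ while verifying that $h$ preserves $G_{\psi}$‑invariance and that the projection term $\rho$ is produced cleanly at the end of the telescope. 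Once $\OP{Id}-\rho=\delta h+h\delta$ is established, $\overline{\pi^*}$ and $q$ are mutually inverse on cohomology, giving $\OP{Fun}_b(\pi_0(X)^k,\B R)\cong H^k_{b,\rm inv}(X;\B R)$; composing with $\iota=P^{-1}$ then yields the displayed chain of isomorphisms.
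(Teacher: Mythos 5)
Your Part (1) is correct and is essentially the paper's argument: telescoping via Lemma \ref{L:f=fz} along a word of length at most $N$, the uniform bound $\|\alpha_g\|_\infty\le N\|f\|_\infty$, averaging the primitives with $\B m$, and commuting $\delta$ past the mean to get $f-Pf=\delta\,\B m(g\mapsto\alpha_g)$. Your reduction of Part (2) to the invariant complex, and your low-degree checks, are also fine.

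The gap is in the heart of Part (2). You reduce the theorem to the existence of a bounded, invariant chain homotopy $h$ with $\OP{Id}-\rho=\delta h+h\delta$, and then explicitly defer its construction (``the genuine difficulty \dots is matching the combinatorics''). That construction is precisely the content of the theorem, and it is not routine in the infinite setting: to move a coordinate $x$ to its basepoint $b_{[x]}$ you must choose some $g\in G_\psi$ with $gx=b_{[x]}$, the resulting expression depends on that choice, and it is not clear that ``symmetrizing with $\B m$'' produces a well-defined cochain satisfying the homotopy identity, nor that it preserves invariance. The paper avoids a basepoint homotopy altogether: for injectivity of $\pi^*$ it inducts on degree using the slice operation $f\mapsto f_x$ and the identity $(\pi^*f)_x=\pi^*(f_{[x]})=-\delta(\alpha_x)$ with $\alpha$ invariant (available by Part (1)); for surjectivity it upgrades $\B m$ to a bi-invariant mean, decomposes each slice as $f_x=P(f_x)+(1-P)(f_x)$, defines $f^\pm$ by $f^\pm_x=P(f_x)$ resp.\ $(1-P)(f_x)$, proves $f^+$ is an invariant cocycle and $f^-$ a coboundary (via the computation $\delta((P\alpha)_x)=f^-_x$), and then writes $f^+=\sum_s I_s\otimes f^+_s$ to induct on degree. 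Until you actually construct $h$ and verify $\OP{Id}-\rho=\delta h+h\delta$ together with its boundedness and invariance, your Part (2) is a plan rather than a proof; I would either carry out that verification in full or switch to the slice-decomposition argument, where the bi-invariance of the mean (which your sketch never invokes) is what makes the key steps work.
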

\begin{remark}\label{R:a-quandle}
The statement for quandles is the same except that the
space bounded functions is replaced by
the subspace $\underline{\OP{Fun}}_b(\pi_0(X,\qu)^k,\B R)$
from Example \ref{E:trivial}.
\end{remark}

\begin{proof}
We first prove that the projection 
$P\colon H^*_b(X;\B R)\to H^*_{b,\rm inv}(X;\B R)$
is an isomorphism.
Since the associated word metric on $G_{\psi}$ has finite
diameter, there exists $N\in \B N$ such that for every $g\in G_{\psi}$ we
have that $g=\psi_{x_1}\dots\psi_{x_n}$ for
some $x_1,\ldots,x_n\in G_{\psi}$ and $n\leq N$. Let $f\colon X^k\to \B R$ be
a bounded cocycle. We know from Lemma \ref{L:f=fz} that $f-f\circ \psi_x = \delta(f_x)$.
Applying this identity inductively, we get that
$$
f\cdot g 
= f - \delta\left( (f_{x_1}) + (f\cdot \psi_{x_1})_{x_2} + \dots 
+ (f\cdot \psi_{x_1}\dots \psi_{x_{k-1}})_{x_k} \right).
$$
Let $\alpha_g = (f_{x_1}) + (f\cdot \psi_{x_1})_{x_2} + \dots + (f\cdot \psi_{x_1}\dots \psi_{x_{k-1}})_{x_k}$ 
(notice that there is a choice of a decomposition of $g$ embedded in this definition). Since $f$ is bounded, 
so is the cochain $\alpha_g$, and there exists a bound uniform with respect to $g$.
We claim that $P(f)$ and $f$ are homologous. Indeed,
\begin{align*}
P(f)(x_1,\ldots,x_k) 
&= \B m(g\mapsto (f\cdot g)(x_1,\ldots,x_k))\\
&= \B m(g\mapsto ( (f - \delta\alpha_g)(x_1,\ldots,x_k))\\
&= f(x_1,\ldots,x_k) - \B m(g\mapsto \delta\alpha_g(x_1,\ldots,x_k))\\
&= f(x_1,\ldots,x_k) - \delta \B m(g\mapsto \alpha_g(x_1,\ldots,x_k)).
\end{align*}
Since $\alpha_g$ is uniformly bounded, the cochain $(x_1,\ldots,x_k)\mapsto \B m(g\mapsto \alpha_g(x_1,\ldots,x_k))$ 
is well defined and bounded.
This shows that if $P[f] = 0$ then, since $P(f)$ and $f$ are homologous,
$[f]=0$ which shows the injectivity of $P$ on cohomology. The surjectivity was
shown in Lemma \ref{L:proj}.

Now we prove the second statement. We start with the injectivity of $\pi^*$
and the proof is by induction with respect to the degree. 
The statement is clear in degree zero. Assume it is true for all degrees
smaller than $k$ and let $f\in \OP{Fun}_b(\pi_0(X,\qu)^k,\B R)$ be such
that $\pi^*(f)=\delta \alpha$ for some $\alpha \in C^{k-1}_b(X;\B R)$.
It follows from the first part that we can take $\alpha$ to be an invariant
cochain which implies that
$$
(\pi^*f)_x=(\delta \alpha)_x = - \delta(\alpha_x),
$$
for every $x\in X$. However, $(\pi^*f)_x = \pi^*(f_{[x]})$, where
$f_{[x]}\colon \pi_0(X,\qu)^{k-1}\to \B R$ is defined analogously.
The induction hypothesis implies that $f_{[x]}=0$ for every $x\in X$ 
which proves the statement.

In order to prove surjectivity assume that the mean $\B m$ on
$G_{\psi}$ is bi-invariant. Such a mean always exists on an amenable
group $G$ \cite[Exercise 1.26]{zbMATH00193576} which can be seen by taking
the left-right action of $G\times G$ on the compact convex set of
means on $G$. Since $G\times G$ is amenable as well, this action
has a fixed point.

The second ingredient of the proof is the cochain complex decomposition
$$
C^*_b(X;\B R)=C_{b,\rm inv}^*(X;\B R)\oplus (1-P)C^*_b(X;\B R);
$$
notice that the second summand is acyclic. 

Let $[f]\in H^k_b(X;\B R)$ be any element. We can assume it is represented
by an invariant cocycle $f$. The invariance of $f$ implies 
that $f_x$ is a cocycle for every $x\in X$. Consider the decomposition
$$
f_x = P(f_x) + (1-P)(f_x)
$$ 
for every $x\in X$. Let $f^+,f^-\in C_b^k(X;\B R)$ be cochains such that
$$
f^+_x = P(f_x)\qquad\text{and}\qquad f^-_x = (1-P)(f_x),
$$
for every $x\in X$. The aim is to show that $f^+$ is an invariant cocycle
homologous to $f$. Let's show the invariance first and let's start with
the following computation. Let $h\in G_{\psi}$ and $x\in X$ be any elements.
\begin{align*}
f^+_{hx}(x_2,\ldots,x_k)
&= P(f_{hx})(x_2,\ldots,x_k)\\
&= \B m(g\mapsto f_{hx}(gx_2,\ldots,gx_k))\\
&= \B m(g\mapsto f_{hx}(hgx_2,\ldots,hgx_k)) 
&\text{\tt by left-invariance of $\B m$}\\
&= \B m(g\mapsto f(hx, hgx_2,\ldots,hgx_k))\\ 
&= \B m(g\mapsto f(x, gx_2,\ldots,gx_k)) &\text{\tt by invariance of $f$}\\
&= P(f_x)(x_2,\ldots,x_k)\\ 
&= f^+_x(x_2,\ldots,x_k).
\end{align*}
It follows that
\begin{align*}
(f^+\cdot h)_x &= f^+_{hx}\cdot h &\text{\tt obvious}\\
&= f^+_x\cdot h &\text{\tt by the previous computation}\\
&= P(f_x)\cdot h\\ 
&= P(f_x) &\text{\tt by the invariance of $P(f_x)$}\\ 
&= f^+_x.
\end{align*}
The invariance of $f^+$ is now clear:
\begin{align*}
f^+(hx_1,\ldots,hx_k)
&= f^+_{hx_1}(hx_2,\ldots,hx_k)= f^+_{x_1}(hx_2,\ldots,hx_k)\\
&= (f^+_{x_1}\cdot h)(x_2,\ldots,x_k)
= f^+_{x_1}(x_2,\ldots,x_k) = f^+(x_1,\ldots,x_k).
\end{align*}
Next observe that $f^+$ is a cocycle:
\begin{align*}
\delta f^+(x,x_1,\ldots,x_{k})
&= f^+(x_1,\ldots,x_{k}) - f^+(\psi_{x}x_1,\ldots,\psi_{x}x_{k})
- \delta f^+_x(x_1,\ldots,x_k) =0.
\end{align*}
The sum of the first two terms vanish by invariance of $f^+$ and
the $\delta f^+_x = \delta P(f_x)=0$ because $f_x$ is a cocycle
since $f$ is an invariant one.
We obtain that both summands in the decomposition $f=f^+ + f^-$
are invariant cocycles, since $f$ and $f^+$ are.

Next we show that $f^-$ is a coboundary. Let $\alpha\in C^{k-1}(X;\B R)$
be such that $\delta(\alpha_x) = f^-_x$ for every $x\in X$. We have
that
$$
\delta\left( (\alpha \cdot g)_x \right)
= \delta(\alpha_{gx}\cdot g)
= \delta(\alpha_{gx})\cdot g
= f^-_{gx}\cdot g
= (f^-\cdot g)_x
= f^-_x,
$$
for every $g\in G_{\psi}$ and $x\in X$.
It follows that
\begin{align*}
\delta\left( (P\alpha)_x \right)(x_1,\ldots,x_k)
&= (P\alpha)_x(x_2,\ldots,x_k) 
- (P\alpha)_x(\psi_{x_1}x_2,\ldots,\psi_{x_1}x_k)-\dots\\
&= (P\alpha)(x,x_2,\ldots,x_k) 
- (P\alpha)(x,\psi_{x_1}x_2,\ldots,\psi_{x_1}x_k)-\dots\\
&=\B m(g\mapsto \alpha(gx,gx_2,\ldots,gx_k) 
- \alpha(gx,g\psi_{x_1}x_2,\ldots,g\psi_{x_1}x_k) - \dots)\\ 
&=\B m(g\mapsto (\alpha\cdot g)(x,x_2,\ldots,x_k) 
- (\alpha\cdot g)(x,\psi_{x_1}x_2,\ldots,\psi_{x_1}x_k) - \dots)\\
&=\B m(g\mapsto (\alpha\cdot g)_x(x_2,\ldots,x_k) 
- (\alpha\cdot g)_x(\psi_{x_1}x_2,\ldots,\psi_{x_1}x_k) - \dots)\\
&=\B m(g\mapsto \delta\left( (\alpha\cdot g)_x \right)(x_1,\ldots,x_k))\\
&=\B m(g\mapsto f^-_x(x_1,\ldots,x_k))
=f^-_x(x_1,\ldots,x_k).
\end{align*}
Since $P\alpha$ is invariant we get that
$$
(\delta(P\alpha))_x = \delta((P\alpha)_x) = f^-_x,
$$
for every $x\in X$ which implies that $\delta\alpha = f^-$.
Hence we can assume that the class $[f]$ is represented
by $f^+$.

Since $f^+_x = P(f_x)$ is an invariant $(k-1)$-cocycle for every $x\in X$, 
we can consider $f^+$ as a cocycle of the form
$$
\sum_{s\in \pi_0(X,\qu)}I_{s}\otimes f^+_s 
\in Z^1_b(X;\B R)\otimes Z^{k-1}_{b,\rm inv}(X;\B R)
\subseteq Z_{b,\rm inv}^k(X;\B R),
$$
where $I_s$ is the indicator function of $\{s\}$, that
is $I_{s}(t) = 1$ if $s=t$ and zero otherwise.
Proceeding by induction we get that every cohomology class
in $H^k_b(X;\B R)$ is decomposed as a product of classes
of degree one and hence they are represented by cocycles from
$\OP{Fun}_b(\pi_0(X)^k,\B R)$.
\end{proof}

\bibliography{/home/kedra/sync/bibliography/bibliography}
\bibliographystyle{plain}

\end{document}